\newtheorem{theorem}{Theorem}[section]
\newtheorem{proposition}[theorem]{Proposition}
\newtheorem{corollary}[theorem]{Corollary}
\newtheorem{lemma}[theorem]{Lemma}
\newtheorem{definition}[theorem]{Definition}
\newtheorem{example}[theorem]{Example}
\newtheorem{remark}[theorem]{Remark}
\newcommand{\R}{{\mathbb{R}}}
\newcommand{\N}{{\mathbb{N}}}
\DeclareMathOperator{\pr}{pr}
\begin{document}

\begin{frontmatter}

\title{Rigorous Enclosures of Solutions of Neumann Boundary Value Problems}

\author[icmc]{Eduardo Ramos}
\ead{eduardoramos@usp.br}

\author[icmc]{Victor Nolasco\corref{cor1}}
\ead{victor.nolasco@usp.br}

\author[icmc,rutgers]{Marcio Gameiro}
\ead{gameiro@icmc.usp.br}

\cortext[cor1]{Corresponding author}

\address[icmc]{Instituto de Ci\^{e}ncias Matem\'{a}ticas e de Computa\c{c}\~{a}o, Universidade de S\~{a}o Paulo, 13560-970, S\~{a}o Carlos, SP, Brazil}

\address[rutgers]{Department of Mathematics, Rutgers University, Piscataway, NJ}

\begin{abstract}
This paper is dedicated to the problem of isolating and validating zeros of non-linear two point boundary value problems. We present a method for such purpose based on the Newton-Kantorovich Theorem to rigorously enclose isolated zeros of two point boundary value problem with Neumann boundary conditions.
\end{abstract}

\begin{keyword}
Newton-Kantorovich Theorem, Rigorous numerics, Computer-assisted proofs, Boundary value problems
\end{keyword}

\end{frontmatter}

\section{Introduction}

Rigorous numerical methods for the verification of existence of solutions of non-linear differential equations are of great importance due to the fact that, although it is usually possible to solve such problems numerically, it is often very hard, if not impossible, to obtain analytical solutions. Additionally such non-linear problems frequently have multiple solutions, whose multiplicity and behaviour often depend on parameters. Hence rigorous numerical methods to prove the existence, multiplicity and provide additional information about the behaviour of solutions to non-linear differential equations are fundamental tools for non-linear analysis and applications. Many authors have proposed verification methods to solve partial differential equations such as the analytical method based on a Newton-Kantorovich theorem in \citep{1995-Plum}, the semi-analytical method in \citep{1988-Nakao, 1995-Nakao}, and the analytical method based on radii polynomials in \citep{2010-Gameiro, 2016-Gameiro}. The methods in \citep{2010-Gameiro, 2016-Gameiro} are based on a contraction mapping argument applied to a Newton-Like operator $T$ on a Banach space, in which the search for zeros of a map $\mathcal{F}$ is shown to be equivalent to finding fixed points of $T$. This method is useful to obtain equilibria \citep{gameiro2008validated, gameiro2011rigorous}, invariant manifolds \citep{castelli2015parameterization, castelli2018parameterization}, connecting orbits \citep{van2011rigorous, lessard2014computer}, solutions in spaces with varying regularity \citep{lessard2017varying}, and other types of solutions. However, these methods are usually computationally expensive due to the fact that they often require the computation of approximate inverses (or estimates on the norm of the inverse) of large matrices. Methods to solve boundary values problems are presented in \citep{Sing.Sub.ZhangBVP1995, CHERPION200175, VERMA20114709} where upper and lower solutions are computed to approximate the solution to the boundary value problem. The approach presented in this paper differs from the methods above in the sense that it is a method to validate a previously computed numerical solution and not a method to compute the approximate solution itself. Hence this paper can be seen as a complement to other methods to compute numerical solutions to boundary value problems.

The goal of this paper is to propose an efficient method to solve rigorously a general two points boundary value problem with Neumann boundary condition of the form
\begin{equation}
\label{equation_roots}
u'' = f(x,u,u'), \quad u'(0)=u'(s)=0.
\end{equation}

To describe the main ideas of the method let us interpret the problem
as a problem of the form $\mathcal{F}(u) = 0$ for $u$ in an appropriate Hilbert space, where $\mathcal{F}(u) = u'' - f(x,u,u')$. The first step is to obtain a non-rigorous numerical candidate $w$ for a zero of $\mathcal{F}$ using a finite dimensional approximation of $\mathcal{F}$. We then use a version of the
Newton-Kantorovich theorem to rigorously verify the existence of a true zero of $\mathcal{F}$ close to $w$, based on the computation of rigorous enclosures for
\begin{equation*}
\|\mathcal{F}(w) \|, \quad \lambda(D\mathcal{F}(w)),
\end{equation*}
and a Lipschitz constant $K>0$ for $D\mathcal{F}$, where $\lambda$ is the bijectivity modulus to be defined later. The computation of these bounds are done via efficient methods based on rigorous integration, a finite dimensional approximation of $D\mathcal{F}(w)$, and a certain bound $K$ on the second partial derivatives of $f$.

This paper is organized as follows: In Section~\ref{sec:Preliminaries} we introduce some definitions and preliminaries results to be used in the paper. In Section~\ref{sec:Isolation_zeros} we present a reformulation of the Newton-Kantorovich theorem based on the bijectivity modulus and show how to compute the constants needed in this reformulation. Section~\ref{sec:Applications} is dedicated to apply the general theoretical results of Section~\ref{sec:Isolation_zeros} to rigorously compute solutions to \eqref{equation_roots}. Finally, in Section~\ref{sec:Conclusion} we present our conclusions and plans for future work.

\section{Preliminaries}
\label{sec:Preliminaries}

In this section we present some definitions and preliminary results needed to prove the main theorem of this paper (Theorem~\ref{kantorovich2}), which uses a modified version of the Newton-Kantorovich theorem to prove existence of solutions to boundary values problems.

Throughout this paper we will denote $I = (0,1)$ and $\bar{I} = [0,1]$. Given $u \in L^2(I)$, we define the cosine and sine series expansions of $u$ by
\[
u(x) \sim \widehat{u}_{\cos}(1) + \sum_{k=2}^\infty \widehat{u}_{\cos}(k) \sqrt{2}\cos((k-1)\pi x)
\]
and
\[
u(x) \sim \sum_{k=2}^\infty \widehat{u}_{\sin}(k) \sqrt{2}\sin((k-1)\pi x)
\]
respectively, where
\begin{equation*}
\widehat{u}_{\cos}(1)=\int_0^1 u(x)dx, \quad
\widehat{u}_{\cos}(k) = \int_0^1 u(x)\sqrt{2} \cos((k-1)\pi x) dx \quad \text{for}~ k\geq 2
\end{equation*}
and
\begin{equation*}
\widehat{u}_{\sin}(k)= \int_0^1 u(x)\sqrt{2} \sin((k-1)\pi x) dx \quad \text{for}~ k\geq 2.
\end{equation*}
Moreover (see \citep[p. 145]{2011-Brezis}) the sets
\begin{equation*}
\left\{1, \sqrt{2} \cos(\pi x),\sqrt{2} \cos(2\pi x),\cdots\right\} \quad \text{and} \quad \left\{\sqrt{2} \sin(\pi x),\sqrt{2} \sin(2\pi x),\cdots\right\}
\end{equation*}
are orthonormal bases for $L^{2}(I)$. The following is a well know consequence of the Parseval formula (see \citep[Theorem 7.6]{1980-Taylor}).

\begin{proposition}
\label{isometry}
Let $X$ be a Hilbert space with orthonormal basis $B_X=\{ e_1,e_2,\cdots \}$. Given $x\in X$ let $\pi_{X}(x) = \left(\langle x,e_k \rangle\right)_{k\in \N}$. Then $\pi_X(x) \in \ell^2(\N)$ for all $x\in X$ and $\pi_X \colon L^2(I)\to \ell^2(\N)$ is an isometric isomorphism.
\end{proposition}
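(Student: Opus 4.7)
The plan is to identify this as essentially a restatement of Parseval's identity together with the Riesz–Fischer theorem for abstract Hilbert spaces. I would first verify the codomain claim and isometry simultaneously, then construct the inverse map to obtain surjectivity, and finally observe that linearity and injectivity come for free.

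For the isometry, I would start from the standard estimate that, for any finite $N$, Bessel's inequality gives $\sum_{k=1}^{N} |\langle x, e_k\rangle|^{2} \leq \|x\|^{2}$. Letting $N \to \infty$ shows $\pi_X(x) \in \ell^{2}(\mathbb{N})$ and $\|\pi_X(x)\|_{\ell^{2}} \leq \|x\|_X$. Since $B_X$ is an orthonormal \emph{basis}, the partial sums $S_N = \sum_{k=1}^{N}\langle x,e_k\rangle e_k$ converge to $x$ in $X$, so by continuity of the norm $\|x\|_X^{2} = \lim_{N\to\infty}\|S_N\|_X^{2} = \lim_{N\to\infty}\sum_{k=1}^{N}|\langle x,e_k\rangle|^{2} = \|\pi_X(x)\|_{\ell^2}^{2}$. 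Thus $\pi_X$ is norm preserving, which in particular yields linearity being obvious from properties of the inner product, and injectivity being immediate from the isometry.

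For surjectivity, given any $(a_k) \in \ell^{2}(\mathbb{N})$ I would set $y_N = \sum_{k=1}^{N} a_k e_k$ and check that $(y_N)$ is Cauchy in $X$: for $M < N$, orthonormality gives $\|y_N - y_M\|_X^{2} = \sum_{k=M+1}^{N} |a_k|^{2}$, which tends to zero since $(a_k)\in \ell^{2}$. Completeness of $X$ yields a limit $y \in X$, and continuity of the inner product together with $\langle e_j, e_k\rangle = \delta_{jk}$ implies $\langle y, e_j\rangle = \lim_{N\to\infty}\langle y_N, e_j\rangle = a_j$ for each $j$. Hence $\pi_X(y) = (a_k)$, establishing surjectivity.

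No step is really an obstacle here; the only subtle point is that the hypothesis ``orthonormal basis'' is used in two distinct ways (Parseval's identity for the isometry, and completeness of $X$ for the construction of preimages), so I would be careful to state explicitly where each is invoked. The statement of the proposition refers to $\pi_X \colon L^{2}(I) \to \ell^{2}(\mathbb{N})$ in the last line, but this is clearly a typographical slip for the abstract map $\pi_X \colon X \to \ell^{2}(\mathbb{N})$; the specialization to $X = L^{2}(I)$ with the two trigonometric bases listed just before is then an immediate corollary used throughout the rest of the paper.
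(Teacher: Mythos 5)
The paper does not actually prove this proposition; it simply labels it a ``well known consequence of the Parseval formula'' and cites Taylor~\cite[Theorem 7.6]{1980-Taylor}, so there is no in-text argument to compare against. Your proof supplies the standard textbook argument that the citation stands in for: Bessel's inequality gives $\pi_X(x)\in\ell^2(\N)$, convergence of the partial sums $S_N\to x$ upgrades Bessel to Parseval and hence to the isometry, and the Riesz--Fischer construction (Cauchy partial sums of $\sum a_k e_k$, completeness of $X$, continuity of the inner product) gives surjectivity. Each step is correct and each use of the ``orthonormal basis'' hypothesis is invoked in the right place. You are also right that the final clause of the proposition should read $\pi_X\colon X\to\ell^2(\N)$ rather than $\pi_X\colon L^2(I)\to\ell^2(\N)$; this is a typographical slip in the paper, since the immediate applications (Corollary~\ref{parsevalcos} and Corollary~\ref{hc_iso_iso}) instantiate $X$ as $L^2(I)$ and $H^2_N(I)$ respectively. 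In short, your proof is correct, complete, and gives the argument the paper outsources to the reference.
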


Thus, denoting $\pi_{\cos}(u)=(\widehat{u}_{\cos}(1),\widehat{u}_{\cos}(2),\cdots)$ and $\pi_{\sin}(u)=(\widehat{u}_{\sin}(2),\widehat{u}_{\sin}(3),\cdots)$, for $u\in L^{2}(I)$, the following corollary follows directly from the proposition above.

\begin{corollary}
\label{parsevalcos}
Given $u\in L^2(I)$ we have that $\pi_{\cos}(u), \pi_{\sin}(u) \in \ell^2(\N)$ and moreover $\pi_{\cos} \colon L^2(I) \to \ell^2(\N)$ and $\pi_{\sin} \colon L^2(I)\to \ell^2(\N)$ are isometric isomorphisms.
\end{corollary}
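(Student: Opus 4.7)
The proof is essentially a matter of unpacking definitions and quoting Proposition~\ref{isometry} twice. The plan is as follows.

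First, I would fix $X = L^2(I)$ and take the orthonormal basis $B_X = \{1, \sqrt{2}\cos(\pi x), \sqrt{2}\cos(2\pi x), \ldots\}$, whose orthonormality and completeness are exactly the content quoted from \cite[p.~145]{2011-Brezis}. Labeling $e_1 = 1$ and $e_k = \sqrt{2}\cos((k-1)\pi x)$ for $k \geq 2$, a direct inspection of the defining formulas for $\widehat{u}_{\cos}(k)$ shows that $\widehat{u}_{\cos}(k) = \langle u, e_k \rangle_{L^2(I)}$ for every $k \in \N$. Hence the abstract map $\pi_X$ from Proposition~\ref{isometry} coincides with $\pi_{\cos}$ under this choice of basis, and the proposition immediately yields that $\pi_{\cos}(u) \in \ell^2(\N)$ and that $\pi_{\cos} \colon L^2(I) \to \ell^2(\N)$ is an isometric isomorphism.

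Second, I would repeat the same argument with the sine basis, $e_k = \sqrt{2}\sin((k-1)\pi x)$ for $k \geq 2$, indexing, say, by $k \in \N$ after a trivial reindexing (or equivalently by $\{2, 3, \ldots\}$, which is order-isomorphic to $\N$). Again the coefficients $\widehat{u}_{\sin}(k)$ match the inner products $\langle u, e_k \rangle_{L^2(I)}$ by construction, so a second application of Proposition~\ref{isometry} gives that $\pi_{\sin}(u) \in \ell^2(\N)$ and that $\pi_{\sin}$ is an isometric isomorphism.

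There is no real obstacle here; the only thing to be careful about is the index shift (the cosine expansion starts at $k=1$ with the constant term, whereas the sine expansion starts at $k=2$), so one must be explicit that the indexing used in the definitions of $\pi_{\cos}$ and $\pi_{\sin}$ matches, up to a trivial relabeling, the indexing of the abstract orthonormal basis in Proposition~\ref{isometry}. Once this bookkeeping is done, both statements are instances of the same general fact.
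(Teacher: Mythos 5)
Your proof is correct and follows exactly the route the paper intends: the paper states the corollary as a direct consequence of Proposition~\ref{isometry} applied to the two orthonormal bases quoted from \cite[p.~145]{2011-Brezis}, which is precisely what you unpack. Your attention to the index bookkeeping (constant term at $k=1$ for cosine, start at $k=2$ for sine) is a reasonable extra remark but not a departure from the paper's argument.
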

We denote by $H^q(I)$ the Sobolev space of functions $u \in L^{2}(I)$ whose $m$-th weak derivative $u^{(m)}$ exists and is square integrable for all $0 \leq m\leq q$. The space $H^q(I)$ is Hilbert with the usual inner product
\begin{equation*}
\left\langle u,v\right\rangle_{H^q(I)}= \sum_{k=0}^q \left\langle u^{(k)},v^{(k)}\right\rangle_{L^2(I)}.
\end{equation*}
It is well known (see \citep[Theorem 8.2]{2011-Brezis}) that every function $u \in H^q(I)$ has a unique representative $\bar{u} \in C^{q-1}(\bar{I})$, that is, such that $u=\bar{u}$ a.e. on $I$. Thus we assume, without loss of generality, that $u(x)=\bar{u}(x)$ for all $x \in \bar{I}$. Our method looks for zeros of $\mathcal{F}$ in the space $H^2_N(I)$ defined below.

\begin{definition}
We denote by $H^2_N(I)$ the subspace of $H^2(I)$ consisting of the functions $u\in H^2(I)$ such that $u'(0)=u'(1)=0$. Furthermore, $H^1_0(I)$ denotes the subspace of functions $u\in H^1(I)$ satisfying $u(0)=u(1)=0$.
\end{definition}

The following proposition follows directly by integration by parts for functions in $H^1(I)$.

\begin{proposition}
\label{lemma0}
If $u\in H^1(I)$ then
\begin{enumerate}[$(i)$]
\item $\widehat{u'}_{\sin}(k) = -(k-1)\pi\widehat{u}_{\cos}(k)$ for all $k\in \N$, $k>2$;
\item $\widehat{u'}_{\cos}(k)=(k-1)\pi \widehat{u}_{\sin}(k)$ for all $k\in \N$ if $u\in H^1_0(I)$.
\end{enumerate} 
\end{proposition}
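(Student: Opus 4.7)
The plan is to invoke the integration-by-parts formula for Sobolev functions: if $u\in H^1(I)$ and $\varphi\in C^1(\bar I)$, then
\begin{equation*}
\int_0^1 u'(x)\varphi(x)\,dx = u(1)\varphi(1)-u(0)\varphi(0) - \int_0^1 u(x)\varphi'(x)\,dx.
\end{equation*}
This identity is standard (see, for instance, \cite[Corollary~8.10]{2011-Brezis}) and is justified by the existence of an absolutely continuous representative of $u\in H^1(I)$, a fact already invoked in the excerpt; this is precisely what makes the pointwise boundary values $u(0)$ and $u(1)$ meaningful.

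For item $(i)$ I would set $\varphi(x) = \sqrt{2}\sin((k-1)\pi x)$. Since $(k-1)$ is a positive integer in the stated range, the boundary term $\sqrt{2}\bigl(u(1)\sin((k-1)\pi)-u(0)\sin(0)\bigr)$ is zero, and the remaining integral yields
\begin{equation*}
\widehat{u'}_{\sin}(k) = -\int_0^1 u(x)\sqrt{2}(k-1)\pi\cos((k-1)\pi x)\,dx = -(k-1)\pi\,\widehat{u}_{\cos}(k),
\end{equation*}
which is exactly the claim.

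For item $(ii)$ I would first handle $k=1$ directly: under $u\in H^1_0(I)$ we have $\widehat{u'}_{\cos}(1)=\int_0^1 u'(x)\,dx = u(1)-u(0)=0$, matching the right-hand side $(1-1)\pi\,\widehat{u}_{\sin}(1)=0$. For $k\geq 2$, I apply the same IBP identity with $\varphi(x)=\sqrt{2}\cos((k-1)\pi x)$; the boundary term $\sqrt{2}\bigl(u(1)\cos((k-1)\pi)-u(0)\bigr)$ vanishes because $u(0)=u(1)=0$, leaving
\begin{equation*}
\widehat{u'}_{\cos}(k) = (k-1)\pi\int_0^1 u(x)\sqrt{2}\sin((k-1)\pi x)\,dx = (k-1)\pi\,\widehat{u}_{\sin}(k).
\end{equation*}

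Each item amounts to a single application of IBP, so I do not anticipate any substantive obstacle. The only care needed is that the formula be valid under $H^1$-regularity rather than only classical $C^1$-regularity, which is automatic once one works with the continuous representative; the rest is a direct trigonometric evaluation of the boundary terms together with the vanishing conditions $\sin(j\pi)=0$ for $j\in\mathbb{Z}$ in $(i)$ and $u(0)=u(1)=0$ in $(ii)$.
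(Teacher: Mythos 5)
Your proof is correct and takes exactly the approach the paper intends: the paper's own ``proof'' consists of the single remark that the proposition is ``clear'' from integration by parts in $H^1(I)$, and you have simply carried out that integration by parts explicitly, with the right test functions and the correct evaluation of the boundary terms. The only minor cosmetic point is that the paper's sine coefficient $\widehat{u}_{\sin}(k)$ is defined only for $k\geq 2$, so in item $(ii)$ the $k=1$ case is vacuous by convention (the prefactor $(k-1)\pi$ being zero, as you observe) rather than requiring a separate computation; this does not affect the substance of your argument.
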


\begin{proposition}
\label{hsinhcos}
The set
\begin{equation*}
B_{H^2_N(I)} = \left\{1,\frac{\sqrt{2}\cos(\pi x)}{\omega(2)},\frac{\sqrt{2}\cos(2 \pi x)}{\omega(3)},\cdots \right\},
\end{equation*}
where $\omega(k) := \sqrt{1+((k-1) \pi)^2+((k-1) \pi)^4}$ for $k\in \N$, is an orthonormal basis for $H^2_N(I)$.
\end{proposition}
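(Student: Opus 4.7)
The plan is to establish the three standard conditions for an orthonormal basis: (i) each element of $B_{H^2_N(I)}$ belongs to $H^2_N(I)$, (ii) the set is orthonormal with respect to $\langle\cdot,\cdot\rangle_{H^2(I)}$, and (iii) it is total, in the sense that the only $u\in H^2_N(I)$ orthogonal to every element is $u=0$. Conditions (i) and (ii) are short computations, while (iii) is the crux and will be reduced to the completeness of the cosine basis in $L^{2}(I)$ via integration by parts.

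For (i), denote $e_1(x)=1$ and $e_k(x)=\sqrt{2}\cos((k-1)\pi x)$ for $k\geq 2$, and set $f_k=e_k/\omega(k)$. Each $e_k$ is $C^\infty(\bar I)$, and $e_k'(x)=-\sqrt{2}(k-1)\pi \sin((k-1)\pi x)$ vanishes at $x=0$ and $x=1$, so $f_k\in H^2_N(I)$. For (ii), I would use that $e_k''=-((k-1)\pi)^2 e_k$ and that $\{e_k\}$ is orthonormal in $L^2(I)$ (cosine basis) while $\{e_k':k\geq 2\}$ is a scaled sine family that is also $L^2$-orthogonal. Then
\[
\langle f_j,f_k\rangle_{H^2}=\frac{1}{\omega(j)\omega(k)}\Bigl(\langle e_j,e_k\rangle_{L^2}+\langle e_j',e_k'\rangle_{L^2}+\langle e_j'',e_k''\rangle_{L^2}\Bigr)=\frac{\delta_{jk}\,(1+((k-1)\pi)^2+((k-1)\pi)^4)}{\omega(k)^2}=\delta_{jk},
\]
which handles orthonormality (the $k=1$ case is immediate since $f_1=1$).

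For (iii), the key computation is the identity $\langle u,f_k\rangle_{H^2}=\omega(k)\,\widehat{u}_{\cos}(k)$ for every $u\in H^2_N(I)$. To obtain it I would integrate by parts: since $f_k'$ vanishes at the endpoints, $\langle u',f_k'\rangle_{L^2}=-\langle u,f_k''\rangle_{L^2}=((k-1)\pi)^2\langle u,f_k\rangle_{L^2}$; and since both $u'$ and $f_k'''$ (proportional to a sine) vanish at the endpoints, two integrations by parts give $\langle u'',f_k''\rangle_{L^2}=\langle u,f_k^{(4)}\rangle_{L^2}=((k-1)\pi)^4\langle u,f_k\rangle_{L^2}$. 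Summing the three $L^2$ inner products and factoring $\omega(k)^2/\omega(k)=\omega(k)$ yields the claim. Consequently, $\langle u,f_k\rangle_{H^2}=0$ for all $k$ forces $\widehat{u}_{\cos}(k)=0$ for all $k$, and Corollary~\ref{parsevalcos} then gives $u=0$ in $L^2(I)$ and hence in $H^2_N(I)$. Combined with orthonormality, this establishes that $B_{H^2_N(I)}$ is an orthonormal basis.

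The main subtlety is ensuring that every boundary term produced by integration by parts vanishes; this is exactly where the definition of $H^2_N(I)$ (which gives $u'(0)=u'(1)=0$) and the fact that $f_k'$ and $f_k'''$ are proportional to $\sin((k-1)\pi x)$ (which vanishes at the integer endpoints) work together. Once this is clear, the argument is essentially a weighted Parseval identity: the orthonormality of the cosine basis in $L^2(I)$ upgrades, under the Neumann boundary conditions, to orthonormality of the weighted family $\{e_k/\omega(k)\}$ in $H^2_N(I)$.
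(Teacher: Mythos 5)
Your proof is correct, but it takes a genuinely different route to completeness than the paper does. The paper proves density of the linear span directly: it forms the cosine partial sums $u_m$ of $u$, observes that $w_m = u - u_m$ has vanishing cosine coefficients up to order $m$, and then uses Parseval (Proposition~\ref{parsevalcos}) together with Proposition~\ref{lemma0} to show that $\|u - u_m\|_{H^2(I)} \to 0$. You instead prove totality: you derive the weighted Parseval identity $\langle u, f_k\rangle_{H^2} = \omega(k)\,\widehat{u}_{\cos}(k)$ by integration by parts (with the boundary terms killed by the Neumann condition $u'(0)=u'(1)=0$ and by $f_k', f_k''' \propto \sin((k-1)\pi x)$), and then invoke completeness of the cosine system in $L^2(I)$ to conclude that orthogonality to every $f_k$ forces $u=0$. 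These are equivalent characterizations of an orthonormal basis in a Hilbert space. What your approach buys is that the identity $\langle u, f_k\rangle_{H^2} = \omega(k)\,\widehat{u}_{\cos}(k)$ is precisely the content of the paper's Corollary~\ref{hc_iso_iso}, proved in the appendix by the very same integration by parts you carry out; so your argument effectively proves Proposition~\ref{hsinhcos} and Corollary~\ref{hc_iso_iso} simultaneously, while the paper handles them separately. The paper's density argument is more constructive (it exhibits the approximating sequence explicitly) but requires tracking the $H^2$ tail; your totality argument is shorter and offloads the completeness step to the $L^2$ cosine basis. You also spell out membership of each $f_k$ in $H^2_N(I)$ and the orthonormality computation, which the paper leaves implicit. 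Both routes are sound.
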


\begin{proof}
We can directly see that $B_{H^2_N(I)}$ forms an orthonormal set in $H^2_N(I)$. Now, to prove that the linear span of $B_{H^2_N(I)}$ is dense in $H^2_N(I)$, given $u\in H^2_N(I)$, consider $(u_m)_{m\geq 2}$ defined by 

\begin{equation*}
u_m(x) = \sum_{k=2}^{m}\widehat{u}_{\sin}(k)\sqrt{2}\sin((k-1)\pi x).
\end{equation*}
It is clear $u_m$ is in the linear span of $B_{H^2_N(I)}$. Now, let $w_m=u-u_m$ for all $m\in \N$, $m\geq 2$. Since $\widehat{(w_m)}_{\cos}(k)=0$ for all $1\leq k\leq m$ and $\widehat{(w_m)}_{\cos}(k)=\widehat{u}_{\cos,m}(k)$ for $k\geq m+1$, by Propositions~\ref{parsevalcos} and \ref{lemma0} we have
 
\begin{equation*}
 \|u-u_m \|_{H^2(I)} = \sqrt{\sum_{k=m+1}^\infty \widehat{u}_{\cos}(k)^2 + \sum_{k=m+1}^\infty \widehat{u'}_{\sin}(k)^2 + \sum_{k=m+1}^\infty \widehat{u''}_{\cos}(k)^2},
\end{equation*}
but since $u\in H^2_N(I)$, it follows by Proposition~\ref{parsevalcos} that $\widehat{u}_{\cos}\in \ell^2(\N)$, $\widehat{u'}_{\sin}\in \ell^2(\N)$, and $\widehat{u''}_{\cos}\in \ell^2(\N)$ and thus by the above equation we have $\lim_{m\to \infty} \|u-u_m \|_{H^2(I)}=0$, proving thus the density of the linear span of $B_{H^2_N(I)}$ over $H^2_N(I)$.
\end{proof}

Thus, denoting $h_{\cos}(u) = \left(\omega(1)\widehat{u}_{\cos}(1),\omega(2)\widehat{u}_{\cos}(2),\omega(3)\widehat{u}_{\cos}(3),\cdots\right)$ for all $u\in H^2_N(I)$, as a direct  consequence of Proposition~\ref{isometry} and Proposition~\ref{hsinhcos} we have the following.

\begin{corollary}\label{hc_iso_iso}
For all $u\in H^2_N(I)$ we have that $h_{\cos}(u)\in \ell^2(\N)$ and $h_{\cos} \colon H^2_N(I)\to \ell^2(\N)$ is an isometric isomorphism.
\end{corollary}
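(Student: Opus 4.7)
The plan is to invoke Proposition~\ref{isometry} with $X = H^2_N(I)$, using the orthonormal basis $B_{H^2_N(I)}$ supplied by Proposition~\ref{hsinhcos}. This immediately produces an isometric isomorphism $\pi_X \colon H^2_N(I) \to \ell^2(\N)$ sending $u$ to the sequence $(\langle u, e_k\rangle_{H^2_N(I)})_{k\in\N}$ of its $H^2$-Fourier coefficients, and with it both summability in $\ell^2(\N)$ and the isometry property. The only thing left to verify is that this coefficient map coincides with $h_{\cos}$, i.e., that $\langle u, e_k\rangle_{H^2_N(I)} = \omega(k)\,\widehat{u}_{\cos}(k)$ for every $k\in\N$ and every $u\in H^2_N(I)$.

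First I would handle $k=1$: since $e_1 \equiv 1$ has vanishing derivatives and $\omega(1)=1$, the definition of the $H^2$-inner product reduces to $\langle u, 1\rangle_{H^2_N(I)} = \int_0^1 u\,dx = \widehat{u}_{\cos}(1) = \omega(1)\widehat{u}_{\cos}(1)$. For $k\geq 2$, I would introduce $\phi_k(x) = \sqrt{2}\cos((k-1)\pi x)$ so that $e_k = \phi_k/\omega(k)$, $\phi_k'' = -((k-1)\pi)^2\phi_k$, and $\phi_k'(0)=\phi_k'(1)=0$. Combining these with the Neumann conditions $u'(0)=u'(1)=0$ built into $H^2_N(I)$, two successive integrations by parts eliminate every boundary contribution and give $\langle u', e_k'\rangle_{L^2(I)} = ((k-1)\pi)^2\,\widehat{u}_{\cos}(k)/\omega(k)$ together with $\langle u'', e_k''\rangle_{L^2(I)} = ((k-1)\pi)^4\,\widehat{u}_{\cos}(k)/\omega(k)$. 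Adding these to $\langle u, e_k\rangle_{L^2(I)} = \widehat{u}_{\cos}(k)/\omega(k)$ and invoking the defining formula $\omega(k)^2 = 1 + ((k-1)\pi)^2 + ((k-1)\pi)^4$ collapses the sum to exactly $\omega(k)\widehat{u}_{\cos}(k)$, as required.

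No delicate estimate is involved: the hard analytic content has already been isolated in Proposition~\ref{hsinhcos} (proved in the appendix), and convergence is handed to us by Proposition~\ref{isometry}. The main obstacle is purely bookkeeping, namely verifying that the weight $\omega(k)$ appearing in the definition of $h_{\cos}$ exactly cancels the factor produced by the $H^2$-coefficient computation. This is essentially tautological, since $\omega(k)$ was defined in Proposition~\ref{hsinhcos} precisely so that $B_{H^2_N(I)}$ is orthonormal with respect to the $H^2$-inner product, which is the same cancellation we need here.
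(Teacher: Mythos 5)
Your proof is correct and follows essentially the same route as the paper's: invoke Proposition~\ref{isometry} with the orthonormal basis from Proposition~\ref{hsinhcos}, then check by direct computation that $\langle u, e_k\rangle_{H^2} = \omega(k)\widehat{u}_{\cos}(k)$, so the coefficient map is exactly $h_{\cos}$. The only cosmetic difference is that you carry out the integrations by parts inline, whereas the paper packages them via Proposition~\ref{lemma0}; the underlying calculation is the same.
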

\begin{proof}
Let us prove that $h_{\cos} \colon H^2_N(I)\to \ell^2(\N)$ is an isometric isomorphism. Indeed, $h_{\cos}$ is linear since its entries are the coefficients of cosine basis of $L^{2}(I)$ multiplied by the weights $\omega(k) = \sqrt{1+((k-1) \pi)^2+((k-1) \pi)^4}$ for $k\in \N$. By Proposition \ref{hsinhcos} the set
\begin{equation*}
    B_{H^2_N(I)} = \left\{1,\frac{\sqrt{2}\cos(\pi x)}{\omega(2)},\frac{\sqrt{2}\cos(2 \pi x)}{\omega(3)},\cdots \right\},
\end{equation*}
is an an orthonormal basis for $H^2_N(I)$. Notice that $h_{\cos} \colon H^2_N(I)\to \ell^2(\N)$, sends the $m$-th element of $B_{H^2_N(I)}$ to the $m$-th element of the canonical basis of $\ell^2(\N)$. Thus, the result follows by Proposition~\ref{isometry} and Proposition~\ref{lemma0} by computing the coefficients of $u \in H^2_N(I)$ in the $B_{H^2_N(I)}$ basis, that is, 
denoting $c_k (t)= \sqrt{2}\cos((k-1)\pi t)$ and $s_k(t)= \sqrt{2}\sin((k-1)\pi t)$, for $k\in \N$, we get that
\begin{equation*}
\langle u, 1\rangle_{H^2(I)} = \int_0^1 u(t) dt
= \widehat{u}_{\cos}(1) = (h_{\cos} (u))(1)
\end{equation*}
and
\begin{equation*}
\begin{aligned}
& \left\langle u, \frac{\sqrt{2}\cos((k-1) \pi t)}{\omega(k)}\right\rangle_{H^2(I)} = \frac{1}{\omega(k)} \int_0^1 u(t) \sqrt{2}\cos((k-1) \pi t) dt \\
& - \frac{(k-1) \pi}{\omega(k)} \int_0^1 u'(t)\sqrt{2}\sin((k-1) \pi t) dt
- \frac{((k-1) \pi)^{2}}{\omega(k)}\int_0^1 u''(t) \sqrt{2}\cos((k-1) \pi t) dt \\
& = \frac{1}{\omega(k)}\left(\int_0^1 u(t) c_k(t) dt - ((k-1)\pi)\int_0^1 u'(t) s_k(t) dt - ((k-1)\pi)^2 \int_0^1 u''(t) c_k(t) dt\right) \\
& = \frac{1}{\omega(k)}\left(\widehat{u}_{\cos}(k) - ((k-1)\pi)\widehat{u'}_{\sin}(k) - ((k-1)\pi)^2 \widehat{u''}_{\cos}(k)\right) \\
& = \frac{1}{\omega(k)}\left(\widehat{u}_{\cos}(k) + ((k-1)\pi)^2 \widehat{u}_{\cos}(k) - ((k-1)\pi)^3 \widehat{u'}_{\sin}(k)\right) \\
& = \frac{1}{\omega(k)}\left(\widehat{u}_{\cos}(k) + ((k-1)\pi)^2 \widehat{u}_{\cos}(k) + ((k-1)\pi)^4 \widehat{u}_{\cos}(k)\right) \\
& = \frac{\omega(k)^2}{\omega(k)} \widehat{u}_{\cos}(k) = \omega(k)\; \widehat{u}_{\cos}(k) = (h_{\cos} (u))(k),
\end{aligned}
\end{equation*}
for all $k>1$.
\end{proof}

In this paper we identify $\R^m$ with a subset of $\ell^2(\N)$ through the isometric embedding $\pi_{\R^m}(a)=(a_1,\cdots,a_m, 0, \cdots)\in \ell^2(\N)$, for $a = (a_1,\cdots,a_m) \in \R^m $. In the following, we let $\pi_{\cos,m} \colon L^2(I)\to \R^m$ be given by $\pi_{\cos,m}(u) =\left (\hat{u}_{\cos}(1),\cdots,\hat{u}_{\cos}(m)\right)$ and let $h^{-1}_{\cos,m} \colon \R^m\to H^2_N(I)$ given by $h^{-1}_{\cos,m}=(h_{\cos})^{-1} \mid_{\R^m}$ be the restriction of $(h_{\cos})^{-1}$ to $\R^m$ via the above identification.

\begin{definition}
\label{F_cosm}
Given $\mathcal{F} \colon H^2_N(I)\to L^2(I)$ and $m\geq 1$ we define  $\mathcal{F}_{\cos,m} \colon \R^m\to \R^m$ by
\begin{equation*}
\mathcal{F}_{\cos,m}=\pi_{\cos,m} \circ \mathcal{F}\circ h_{\cos,m}^{-1}.
\end{equation*}
\end{definition}
As a consequence of the above definition, if $\mathcal{F} \colon H^2_N(I)\to L^2(I)$ is differentiable at $w\in H^2_N(I)$ then $D\mathcal{F}(w)_{\cos,m} \colon \R^m\to \R^m$ is defined by $D\mathcal{F}(w)_{\cos,m} = \pi_{\cos,m}\circ  D\mathcal{F}(w)\circ h^{-1}_{\cos,m}$. The function $D\mathcal{F}(w)_{\cos,m}$ plays an important role in the next section, as a suitable finite dimensional approximation to $D\mathcal{F}(w)$.

Regarding the differentiablity of $\mathcal{F}$ representing two-point boundary value problems without boundary conditions we have the following result.

\begin{proposition}
\label{fdiff}
Given a function $f \colon \R^3\to \R$ such that $f(x, u, v)$ is $C^2$ in $(u, v)$ and piecewise continuous in $x$, it follows that $\mathcal{F} \colon H^2(I) \to L^2(I)$ defined by $\mathcal{F}(u) = u'' - f(x,u,u')$ is Frech\'{e}t differentiable and, for $w\in H^2(I)$, the derivative $D\mathcal{F}(w) \colon H^2(I)\to L^2(I)$ is given by
\begin{equation*} (D\mathcal{F}(w))(v) = v'' - f_u(x,w,w')v - f_{u'}(x,w,w')v', \quad \text{for all}~ v\in H^2(I).
\end{equation*}
\end{proposition}

\begin{proof}
Before proceeding with the proof, we should notice that from Lemma~\ref{lemmacont} it follows that
\begin{equation}
\label{c1est}
\left\|u\right\|_{C^1(\bar{I})}\leq c_1 \left\|u\right\|_{H^2(I)}\mbox{ for all }u\in H^2(I).
\end{equation}
Now let $u\in H^2(I)$ be fixed. Since $u\in H^2(I)$ implies that $u\in C^1(\bar{I})$, letting $z = (x,u,u')$, it follows that $f_u(z)$ and $f_{u'}(z)$ are piecewise continuous, and since $\left\|v\right\|_{L^2(I)}\leq \left\|v\right\|_{H^2(I)}$ and $\left\|v'\right\|_{L^2(I)}\leq \left\|v\right\|_{H^2(I)}$ it follows that
\begin{equation*} \left\|D\mathcal{F}(u)(v)\right\|_{L^2(I)}\leq \left(1+\sup_{x\in \bar{I}} \left|f_u(z)\right| + \sup_{x\in \bar{I}} \left|f_{u'}(z)\right|\right)\left\|v\right\|_{H^2(I)}
\end{equation*}
for all $v \in H^2(I)$ and thus $D\mathcal{F}(u) \colon H^2(I)\to L^2(I)$ is a bounded linear operator. 

Now, notice that $g^x \colon \R^2\to\R$ given by $g^x(u,v) = f(x,u,v)$ is $C^2$ for each fixed $x\in \bar{I}$ and that $D_{(u,v)}f(x,u,v)$ and $D_{(u,v)}^2f(x,u,v)$ are continuous in $(u, v)$ and piecewise continuous in $x$. Hence, letting $r = c_1 \left\| u \right\|_{H^2(I)} + c_1$ and
\[
L = \sup_{x \in \bar{I}, (u, v) \in B(0,r)_{\R^2}} \left\| D_{(u,v)}^2f(x,u,v) \right\|,
\]
it follows, by the Taylor Theorem (see \citep[Theorem 5.6.2]{1971-Cartan}), that
\begin{equation}
\label{taylorapl}
\begin{aligned}
|f(x, u_1, v_1) - f(x, u_0, v_0) - (D_{(u,v)}f(x, u_0, v_0))(h_0, k_0) | \leq\frac{L}{2} \|(h_0,k_0) \|_{\R^2}^2
\end{aligned}
\end{equation}
for all $(x,u_0,v_0), (x,u_1,v_1)\in \bar{I}\times \bar{B}(0,r)_{\R^2}$, where $h_0=u_1-u_0$ and $k_0=v_1-v_0$.

Thus, given $h\in B(0,1)_{H^2(I)}$ it follows from (\ref{c1est}) that $\left\|(u(x),u'(x))\right\|_{\R^2}\leq  c_1 \left\| u \right\|_{H^2(I)} \leq r$, and
\[
\left\|(u(x)+h(x),u'(x)+h'(x))\right\|_{\R^2} \leq c_1\left\| u \right\|_{H^2(I)} + c_1 = r
\]
for all $x\in \bar{I}$. Hence, denoting $w=u+h$, it follows by (\ref{taylorapl}) that
\begin{align*}
& |(\mathcal{F}(u+h) - \mathcal{F}(u) - D\mathcal{F}(u)(h))(x)| = \\
& |f(x,u(x),u'(x)) - f(x,w(x),w'(x)) - D_{(u,v)}f(x,u(x),u'(x))(h(x),h'(x)) | \\
& \leq \frac{L}{2} \|(h(x),h'(x)) \|_{\R^2}^2\leq c_1^2\frac{ L}{2}\|h \|_{H^2(I)}^2
\end{align*}
for all $x\in \bar{I}$. Finally, taking the $L^2$ norm and dividing by $\left\|h\right\|_{H^2(I)}$ we obtain
\begin{equation*}
\frac{\left\|\mathcal{F}(u+h)-\mathcal{F}(u)-D\mathcal{F}(u)(h)\right\|_{L^2(I)}}{\left\|h\right\|_{H^2(I)}} \leq c_1^2\frac{L}{2}\left\|h\right\|_{H^2(I)}
\end{equation*}
for all non-zero $h\in B(0,1)_{H^2(I)}$. Thus the left side in the above inequality goes to $0$ as $\left\|h\right\|_{H^2(I)}\to 0$, which concludes the proof.
\end{proof}

The next result is useful to compute $D\mathcal{F}(w)_{\cos,m} \colon \R^{m} \to \R^{m}$ in the examples. 
\begin{corollary}
\label{cor3} 
Under the hypothesis of Proposition~\ref{fdiff} we have   $\left(D\mathcal{F}(w\right)_{\cos,m})(a)=A a$ where that $A\in M_m(\R)$ is defined by
\begin{equation*} 
A_{1,j} = \int_0^1 u_j''(x) - f_u(x,w(x),w'(x))u_j(x) - f_{u'}(x,w(x),w'(x))u_j'(x) dx
\end{equation*}
and
\begin{equation*}
A_{i,j} = \int_0^1 \left(u_j''(x) - f_u(x,w(x),w'(x))u_j(x) - f_{u'}(x,w(x),w'(x))u_j'(x) \right)\sqrt{2}\cos((i-1)\pi x) dx,
\end{equation*}
where, $u_1 = 1$, $u_j(x) = \frac{\sqrt{2}\cos((j-1)\pi x)}{\omega(j)}$ for $1<i,j\leq m$.
\end{corollary}

\begin{proof}
Given $w\in C^2(\bar{I})$ the derivative $D\mathcal{F}(w)_{\cos,m} \colon \R^{m} \to \R^{m}$ can be expressed by $\left(D\mathcal{F}(w)_{\cos,m}\right)(a)=A a$, where the columns of $A$ are given by $D\mathcal{F}(w)_{\cos,m}(e_{j})$ for $1\leq j \leq m$. Since $h_{\cos,m}^{-1}(e_1) = 1$ and $h_{\cos,m}^{-1}(e_j) = \frac{\sqrt{2}\cos((j-1)\pi x)}{\omega(j)}$ for $1< j \leq m$, follows by Proposition~\ref{fdiff} that 
\begin{equation*} 
D\mathcal{F}(w)_{\cos,m}(e_{1}) = \pi_{\cos,m}\left(-f_{u}(x,w,w')\right) = \left(A_{1,1},\ldots, A_{m,1} \right)
\end{equation*}
and
\begin{equation*}
\begin{aligned}
D\mathcal{F}(w)_{\cos,m}(e_{j}) = & \pi_{\cos,m}\left(-\frac{(\pi (j-1))^{2}\sqrt{2}}{\omega(j)} \cos((j-1)\pi x)\right) - \\
& \pi_{\cos,m}\left(\frac{\sqrt{2}}{\omega(j)} \cos((j-1)\pi x) f_{u}(x,w,w')\right) + \\
& \pi_{\cos,m}\left(\frac{((j-1)\pi \sqrt{2}}{\omega(j)}\sin((j-1)\pi x) \right) = \left(A_{1,j},\ldots, A_{m,j} \right).\\
\end{aligned}
\end{equation*}
The coordinate-wise of the projection $\pi_{\cos,m}$ are the inner product with elements of orthonormal cosine basis in $L^{2}(I)$, that is,
\begin{equation*}
A_{1,1} = -\int_0^1 f_u(x,w(x),w'(x)) dx,
\end{equation*}

\begin{equation*}
\begin{aligned}
A_{1,j} = & -\frac{\sqrt{2}}{\omega(j)} \int_0^1  f_u(x,w(x),w'(x)) \cos((j-1)\pi x) dx \\ & + \frac{(j-1)\pi \sqrt{2}}{\omega(j)} \int_0^1  f_{u'}(x,w(x),w'(x)) \sin((j-1)\pi x) dx
\end{aligned}
\end{equation*}

and

\begin{equation*}
\begin{aligned}
A_{i,j} = & - \dfrac{((j-1)\pi)^{2}}{\omega(j)} \int_0^1  2 \cos((i-1)\pi x) \cos((j-1)\pi x)dx \\
& - \dfrac{2}{w(j)} \int_0^1  \cos((i-1)\pi x) f_u(x,w(x),w'(x)) \cos((j-1)\pi x)  dx \\
& + \frac{2(j-1)\pi}{\omega(j)} \int_0^1  \cos((i-1)\pi x)  f_{u'}(x,w(x),w'(x)) \sin((j-1)\pi x)dx \\
= & - \dfrac{((j-1)\pi)^{2}}{\omega(j)} \delta_{ij} - \dfrac{2}{w(j)} \int_0^1 \cos((i-1)\pi x) f_u(x,w(x),w'(x)) \cos((j-1)\pi x) dx \\
& + \frac{2(j-1)\pi}{\omega(j)} \int_0^1  \cos((i-1)\pi x)  f_{u'}(x,w(x),w'(x)) \sin((j-1)\pi x)dx.
\end{aligned}
\end{equation*}    
\end{proof}

We propose the definition of the bijectivity modulus below in order to formulate our theory of isolation of zeros in two-point boundary value problems. In what follows we always let $X$, $Y$ and $Z$ denote Banach Spaces.

\begin{definition} For $F\in \mathcal{L}(X,Y)$ we define the \textit{bijectivity} modulus $\lambda(F)$ of $F$ by

\begin{equation*}
\lambda(F) =
\begin{cases} \|F^{-1}\|^{-1}, & \text{if}~ F ~\text{is invertible} \\
0, & \text{otherwise}
\end{cases}
\end{equation*}
\end{definition}

Notice that $\lambda(F)$ is well defined since, due to the Open Mapping Theorem, the invertibility of a bounded linear operator $F \in \mathcal{L}(X,Y)$ implies directly in the boundedness of its inverse, and $F^{-1} \neq 0$ implies that $\| F^{-1} \| \neq 0$.

Before we proceed, we shall prove some basic properties of the bijectivity modulus, which will be used to prove the main theorem of this paper (Theorem~\ref{strong}).

\begin{proposition}
\label{basic2}
Given $F\in \mathcal{L}(X,Y)$ and $G\in \mathcal{L}(X,Y)$ it follows that $\left|\lambda(F)-\lambda(G)\right|\leq \left\|F-G\right\|$.
\end{proposition}

\begin{proof}
Let $j(F) = \inf_{\left\|x\right\|_X=1}\left\|Fx\right\|_Y$, called the injectivity modulus of $F$ and $k(F) = \sup \{r\geq 0\ |\ B(0,r)_Y\subset F(B(0,1)_X)\}$, called the surjectivity modulus of $F$ (see \citep[Definition 3, p. 86]{2007-Muller}). Following \citep[Theorem 4, p. 86, Theorem 7, p. 87 and Proposition 9, p. 88]{2007-Muller}, the following properties regarding the injectivity and surjectivity modulus are valid
\begin{enumerate}[$(a)$] \itemsep1em
\item $j(F)\neq 0$ if and only if $F$ is injective with range closed, and $k(F)\neq 0$ if and only if $F$ is surjective;
\item If $F$ is invertible then $j(F)=k(F)= \|F^{-1}\|^{-1}$;
\item $\left|j(F)-j(G)\right|\leq \left\|F - G\right\|$ and $\left|k(F)-k(G)\right|\leq \left\|F - G\right\|$ for all $F,G\in \mathcal{L}(X,Y)$.
\end{enumerate}
Notice that, if $F$ and $G$ are both not invertible, then by definition we have $\lambda(F)=\lambda(G)=0$ in which case (\ref{toprove}) is trivially true. Thus, we can suppose without loss of generality that $F$ is invertible and prove that the following holds in such case:
\begin{equation}
\label{toprove}
\left|\lambda(F)-\lambda(G)\right|=\left|j(F)-j(G)\right| \text{ or } \left|\lambda(F)-\lambda(G)\right|=\left|k(F)-k(G)\right|
\end{equation}
which, due to item $(c)$ above, will conclude the proof.

If $F$ and $G$ are both invertible, then by the definition of bijectivity modulus and due to item $(b)$ above it follows that
\begin{equation*}
\lambda(F) = \|F^{-1}\|^{-1}=j(F)\mbox{ and }\lambda(G)= \|G^{-1}\|^{-1}=j(G)
\end{equation*}
in which case (\ref{toprove}) is true.
Now, if we suppose that $F$ is invertible and $G$ is not invertible then it would follow that either $G$ is not injective or not onto. In case $G$ is not injective then by items $(a)$ and $(b)$ and by the definition of bijectivity modulus we have
\begin{equation*}\lambda(F)=\left\|F^{-1}\right\|^{-1}=j(F)\mbox{ and }\lambda(G)=0=j(G)
\end{equation*}
in which case (\ref{toprove}) is true.  Analogously, if $F$ is invertible and $G$ is not onto then  $\lambda(F)=\left\|F^{-1}\right\|^{-1}=k(F)$ and $\lambda(G)=0=k(G)$, in which case (\ref{toprove}) is true as well.
\end{proof}

The next definitions are usual in operator theory (see \citep[Sec. 5.4]{2011-Brezis} and \citep[Sec. 2.6]{1983-Marti}). 

\begin{definition}
We say $X=X_1\oplus X_2$ is a \emph{Hilbert sum} if $X$ is a Hilbert space written as the direct sum of $X_1$ and $X_2$, closed subspaces of $X$, such that $\langle x_1,x_2 \rangle = 0$ for all $x_1 \in X_1$ and $x_2\in X_2$.
\end{definition}

\begin{definition}
If $X=X_1\oplus X_2$ is a Hilbert sum then, given $F_1 \colon X_1\to X_1$ and $F_2 \colon X_2\to X_2$, we let $F=F_1\oplus F_2 \colon X \to X$ be the function defined by $F(x)=F_1(x_1)+F_2(x_2)$ for $x=x_1+x_2\in X$ with $x_1\in X_1$ and $x_2\in X_2$.
\end{definition}

\begin{proposition}
\label{bij_mod}
Let $X=X_1\oplus X_2$ be a Hilbert sum and given linear operators $F_1 \colon X_1\to X_1$ and $F_2 \colon X_2\to X_2$, suppose that $F=F_1\oplus F_2$ is bounded. Then $F_1$ and $F_2$ are bounded and
\begin{itemize}
\item[(i)] $\left\|F\right\|=\max\left\{\left\|F_{1}\right\|,\left\|F_{2}\right\|\right\}$;
\item[(ii)] $\lambda(F) = \min \left\{\lambda(F_{1}), \lambda(F_{2})\right\}$.
\end{itemize}
\end{proposition}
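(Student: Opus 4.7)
The plan is to exploit the Pythagorean decomposition $\|x_1+x_2\|^2 = \|x_1\|^2+\|x_2\|^2$ characteristic of Hilbert sums, together with the fact that $F_1(x_1)\in X_1$ and $F_2(x_2)\in X_2$, so that also $\|F(x)\|^2=\|F_1(x_1)\|^2+\|F_2(x_2)\|^2$. First I would verify boundedness of the restrictions: for $x_1\in X_1$, setting $x=x_1+0$ gives $\|F_1(x_1)\|=\|F(x_1)\|\le\|F\|\,\|x_1\|$, and symmetrically for $F_2$, so $\max\{\|F_1\|,\|F_2\|\}\le\|F\|$.

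For (i), using the orthogonal decomposition above,
\begin{equation*}
\|F(x)\|^{2}=\|F_1(x_1)\|^2+\|F_2(x_2)\|^2\le \max\{\|F_1\|^2,\|F_2\|^2\}(\|x_1\|^2+\|x_2\|^2),
\end{equation*}
which yields the reverse inequality $\|F\|\le\max\{\|F_1\|,\|F_2\|\}$, combining with the above to give (i).

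For (ii), I would split into two cases. Assume first that $F$ is invertible. To show $F_1$ is invertible, injectivity follows because $F_1(x_1)=0$ forces $F(x_1+0)=0$, hence $x_1=0$. Surjectivity: given $y_1\in X_1\subset X$, pick the unique $x=x_1+x_2$ with $F(x)=y_1$; projecting onto the orthogonal summands $X_1$ and $X_2$ gives $F_1(x_1)=y_1$ and $F_2(x_2)=0$. The same argument applies to $F_2$. Since $F_1$ and $F_2$ are bijective bounded operators between Hilbert spaces, by the Open Mapping Theorem both inverses are bounded, and a direct check shows $F^{-1}=F_1^{-1}\oplus F_2^{-1}$. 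Applying (i) to $F^{-1}$ gives
\begin{equation*}
\lambda(F)=\|F^{-1}\|^{-1}=\max\{\|F_1^{-1}\|,\|F_2^{-1}\|\}^{-1}=\min\{\lambda(F_1),\lambda(F_2)\}.
\end{equation*}

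For the remaining case where $F$ is not invertible, $\lambda(F)=0$ by definition, so I would show $\min\{\lambda(F_1),\lambda(F_2)\}=0$ by proving its contrapositive: if $F_1$ and $F_2$ are both invertible, then the operator $G:=F_1^{-1}\oplus F_2^{-1}$ is well defined and bounded (again by (i)), and the identities $G\circ F=\mathrm{id}_X$ and $F\circ G=\mathrm{id}_X$ verified separately on $X_1$ and $X_2$ give $G=F^{-1}$, contradicting non-invertibility of $F$. The main subtlety is ensuring that the argument producing $F^{-1}=F_1^{-1}\oplus F_2^{-1}$ really does use the hypothesis that each $F_i$ maps $X_i$ into itself (without this, the projections onto summands would not decouple), so the block-diagonal structure is crucial and should be emphasized before invoking part (i).
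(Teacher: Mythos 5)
Your proof is correct and follows the same structure as the paper's: boundedness of the restrictions, the max formula for the norm in (i), and then for (ii) a case split on invertibility of $F$ together with the identity $F^{-1}=F_1^{-1}\oplus F_2^{-1}$ fed back into (i). The only difference is that you supply self-contained proofs (via the Pythagorean identity and explicit injectivity/surjectivity checks plus the Open Mapping Theorem) of the two operator-theoretic facts that the paper simply cites as classical, and both of those arguments are carried out correctly.
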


\begin{proof}
Since $F_1$ and $F_2$ are the restrictions of $F$ to $X_1$ and $X_2$, respectively, it follows directly that $F_1$ and $F_2$ are bounded. On the other hand $(i)$ is a classical property in operator theory (see  \citep[p. 122]{1997-Kadison}).

For item $(ii)$, it is a classical property in operator theory that $F=F_1\oplus F_2$ is invertible if and only if both $F_1$ and $F_2$ are invertible, in which case $F^{-1}=F_1^{-1}\oplus F_2^{-1}$ (see \citep[Problem 6.1.17]{2002-Abramovich}). 

Thus, if either $F_1$ or $F_2$ is not invertible then, by the above property, $F$ is not invertible as well, in which case the definition of the bijectivity modulus implies that
\begin{equation*}
\min\{ \lambda(F_{1}), \lambda(F_{2}) \} = 0 = \lambda(F)
\end{equation*}
and the proposition is true in this case. On the other hand, if both $F_1$ and $F_2$ are invertible, then by the property stated above it follows that $F$ is invertible as well with $F^{-1}=F_1^{-1} \oplus F_2^{-1}$ and thus the definition of bijectivity modulus and item $(i)$ implies that
\begin{equation*}
\begin{aligned}
\lambda(F) & = \|F^{-1}\|^{-1} = \left( \max \left\{ \|F_{1}^{-1}\|, \|F_{2}^{-1}\|\right\} \right)^{-1} \\
& = \min \left\{ \|F_{1}^{-1}\|^{-1}, \|F_{2}^{-1}\|^{-1}\right\}
= \min \left\{ \lambda(F_{1}), \lambda(F_{2}) \right\}.
\end{aligned}
\end{equation*}
which proves the proposition.
\end{proof}

\section{Isolation of zeros for BVP with Neumann boundary conditions}
\label{sec:Isolation_zeros}

Letting $\mathcal{F} \colon H^2_N(I)\to L^2(I)$ be defined by
\begin{equation}
\label{eqf}
\mathcal{F}(u) = u'' - f(x,u,u')
\end{equation}
and given an approximate zero $w\in C^2(I)$ for $\mathcal{F}(u)=0$, our objective in this section is to compute the constants $\eta$, $\nu$ and $K$ needed to apply the Kantorovich Theorem (Theorem \ref{kantorovich2} bellow) in order to check for zeros of $\mathcal{F}$ near $w$.

\subsection{Kantorovich Theorem and Computation of the Bounds}

We now present the main theorems used by our method. The proofs of these results are given in the next subsections. We provide the following reformulation of the Kantorovich Theorem, using the bijectivity modulus.

\begin{theorem}
\label{kantorovich2}
Let $U\subset X$ be open, $\mathcal{F} \colon U\subset X \to Y$ differentiable, $A \subset U$ be a non-empty convex domain, $x_0\in A$, $R>0$ satisfying $\bar{B}(x_0,R)\subset A$, and $\eta, \nu, K\in \R$ satisfying:

\begin{enumerate}[(i)]
\setlength\itemsep{1em}
\item $\|\mathcal{F}(x_0)\|\leq \eta$, $\lambda(D\mathcal{F}(x_0))\geq \nu$ and $\left\|D\mathcal{F}(x)-D\mathcal{F}(y)\right\|\leq K\left\|x-y\right\|$ for all $x, y\in A$.
\item $g_1(t)=\eta- \nu t+\dfrac{K}{2}t^2$ has zeros $t^*<t^{**}$ with $t^*\in [0,R]$.
\end{enumerate}
Then $\mathcal{F}$ has a zero $x^*\in \bar{B}(x_0,t^*)$ and no other zeros in $B(x_0,t^{**})\cap A$.
\end{theorem}

\begin{proof}
Notice that if $\nu=0$ then $g_1$ could not possibly have distinct non-negative zeros. Hence it follows that $\nu>0$ and thus $\lambda(D\mathcal{F}(x_0))\geq \nu>0$, which by definition implies that $D\mathcal{F}(x_0)$ is invertible with $\|D\mathcal{F}(x_0)^{-1}\|=\lambda(D\mathcal{F}(x_0))^{-1}\leq \nu^{-1}$. Thus, letting $\eta^*=\nu^{-1}\eta$ and $K^*=\nu^{-1}K$ it follows that
\begin{equation*}
\begin{aligned}
\|D\mathcal{F}(x_0)^{-1}\mathcal{F}(x_0)\|\leq \|D\mathcal{F}(x_0)^{-1}\| \| \mathcal{F}(x_0)\|\leq \eta^*
\end{aligned}
\end{equation*}
and
\begin{equation*}
\begin{aligned}
\|D\mathcal{F}(x_0)^{-1}(D\mathcal{F}(x)-D\mathcal{F}(y))\|  \leq \|D\mathcal{F}(x_0)^{-1}\| \|(D\mathcal{F}(x)-D\mathcal{F}(y))\| \leq K^*\|x-y\|
\end{aligned}
\end{equation*}
for all $x,y\in A$. It then follows from item $(ii)$ that the polynomial $g(t)=\eta^* - t + \dfrac{K^*}{2}t^2=\nu^{-1}g_1(t)$ has zeros $t^*<t^{**}$ with $t^*\in [0,R]$, and thus we conclude from the Kantorovich Theorem (see \citep{1982-Kantorovich,2017-Fernandez}) that $\mathcal{F}$ must have a zero $x^*\in \bar{B}(x_0,t^*)$ and no other zeros in $B(x_0,t^{**})\cap A$, proving the theorem. 
\end{proof}

The following theorem tells us how to compute the constants $\nu$ and $K$ in item $(i)$ above, when $\mathcal{F} \colon H_N^2(I)\to L^2(I)$ is given by (\ref{eqf}). Let $\left\|g\right\|_{C^0(\bar{I})}=\sup_{x\in \bar{I}}\left|g(x)\right|$ for $g\in C^0(\bar{I})$, $\left\|g\right\|_{C^1(\bar{I})}=\sup_{x\in \bar{I}}\sqrt{g(x)^2+g'(x)^2}$ for $g\in C^1(\bar{I})$, and let $c_1=\left(\tanh(1)\right)^{-\frac{1}{2}}=\sqrt{\frac{e^2+1}{e^2-1}}$. The proof of this theorem is presented in the following subsections.

\begin{theorem}
\label{strong}
Let $f \colon \R^3\to \R$ be such that $f(x, u, v)$ is $C^2$ in $(u, v)$ and piecewise continuous in $x$, $w \colon \bar{I} \to \R$ be a $C^2$ function in $H^2_N(\bar{I})$, let $z(x)=(x,w(x),w'(x))$ and suppose that
\begin{equation}
\label{eq:estimateN}
\left\|f_u(z)\right\|_{C^0(I)}+\left\|f_u(z)\right\|_{C^1(I)}+\left\|f_{u'}(z)\right\|_{C^0(I)}+\left\|f_{u'}(z)\right\|_{C^1(I)} \leq N,
\end{equation}
and
\begin{equation*}
c_1\left\|D_{(u,v)}^2f(x,u,v)\right\|\leq K \quad \text{for all} \quad (x,u,v) \in \bar{I} \times \bar{B}\left(0,c_1 r\right)_{\R^{2}}.
\end{equation*}
Then $\mathcal{F} \colon H^2_N(I)\to L^2(I)$, as defined in (\ref{eqf}), is Frech\'{e}t-differentiable and moreover
\begin{enumerate}[$(i)$]
\item $\lambda\left(D\mathcal{F}(w)\right)\in\left[L-\frac{N}{\pi m},L+\frac{N}{\pi m}\right]$, where $L=\min \left\{\lambda\left(D\mathcal{F}(w)_{\cos, m} \right), \dfrac{(\pi m)^2}{\omega(m+1)}\right\}$;
\item $\|D\mathcal{F}(u_1)-D\mathcal{F}(u_2)\|\leq  K\|u_1-u_2 \|_{H^2_N(I)}$ for all $u_1,\, u_2\in \bar{B}\left(0,r\right)_{H^2(I)}$.
\end{enumerate}
\end{theorem}

\subsection{Proof of item $(i)$ of Theorem~\ref{strong}}

\begin{definition}
Given $m\geq 1$, we define the truncation operators $\rho_{\cos,m}, \rho_{\sin,m} \colon L^2(I)\to C^\infty(\bar{I})$ and the defect operators $\rho^*_{\cos,m}, \rho^{*}_{\sin,m} \colon L^2(I)\to L^2(I)$ by
\begin{equation*}
\begin{aligned}
\rho_{\cos,m}(u) = \widehat{u}_{\cos}(1)+\sum_{k=2}^{m} \widehat{u}_{\cos}(k)\sqrt{2}\cos((k-1)\pi x),\quad \rho^*_{\cos,m}(u) = u-\rho_{\cos,m}, \\
\rho_{\sin,m}(u) = \sum_{k=2}^{m} \widehat{u}_{\sin}(k)\sqrt{2}\sin((k-1)\pi x), \quad \text{and} \quad \rho^*_{\sin,m}(u) = u-\rho_{\sin,m}.
\end{aligned}
\end{equation*}
\end{definition}

\begin{proposition}
\label{lemma1}
Given $u\in H^1(I)$ and $m\geq 1$ it follows that
\begin{enumerate}[$(i)$]
\item $\left\|\rho^*_{\sin,m}(u)\right\|_{L^2(I)} \leq \dfrac{1}{\pi m} \|u'\|_{L^2(I)}$ \; for $u\in H^1_0(I)$;
\item $\left\|\rho^*_{\cos,m}(u)\right\|_{L^2(I)} \leq \dfrac{1}{\pi m} \|u'\|_{L^2(I)}$;
\item $\left\|\rho_{\cos,m}(u)\right\|_{H^1(I)} \leq \|u\|_{H^1(I)}$.
\end{enumerate}
\end{proposition}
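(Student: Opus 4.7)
The plan is to establish all three estimates via Parseval's identity (Corollary~\ref{parsevalcos}), combining it with the relations between Fourier coefficients of $u$ and $u'$ given by Proposition~\ref{lemma0}. The key observation is that $\rho^*_{\cos,m}(u)$ and $\rho^*_{\sin,m}(u)$ are, by construction, tails of orthonormal expansions, so their $L^2$-norms are controlled by the tail sums of squared coefficients, which lose a factor $1/((k-1)\pi)^2 \leq 1/(\pi m)^2$ once we swap cosine coefficients for sine coefficients of the derivative (or vice versa).

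For item $(ii)$, I would write
\begin{equation*}
\bigl\|\rho^*_{\cos,m}(u)\bigr\|_{L^2(I)}^2 = \sum_{k=m+1}^\infty |\widehat{u}_{\cos}(k)|^2
\end{equation*}
via Corollary~\ref{parsevalcos}, then apply Proposition~\ref{lemma0}$(i)$ to rewrite each $\widehat{u}_{\cos}(k)$ as $-\widehat{u'}_{\sin}(k)/((k-1)\pi)$. Since $(k-1)\pi \geq \pi m$ for $k \geq m+1$, pulling this factor out yields the bound $\frac{1}{(\pi m)^2} \sum_{k=m+1}^\infty |\widehat{u'}_{\sin}(k)|^2 \leq \frac{1}{(\pi m)^2}\|u'\|_{L^2(I)}^2$, again by Parseval. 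Item $(i)$ is entirely analogous, with sines and cosines interchanged; here the hypothesis $u\in H^1_0(I)$ is exactly what enables the use of Proposition~\ref{lemma0}$(ii)$, which is the reason the boundary condition appears only in $(i)$.

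For item $(iii)$, I would note that $\rho_{\cos,m}(u)$ is a smooth trigonometric polynomial whose derivative is
\begin{equation*}
\bigl(\rho_{\cos,m}(u)\bigr)'(x) = \sum_{k=2}^m \widehat{u}_{\cos}(k)\,(-(k-1)\pi)\sqrt{2}\sin((k-1)\pi x),
\end{equation*}
so by Proposition~\ref{lemma0}$(i)$ its sine coefficients coincide with those of $u'$ for $2\leq k\leq m$ and vanish otherwise; in other words $(\rho_{\cos,m}(u))' = \rho_{\sin,m}(u')$. Parseval then gives $\|(\rho_{\cos,m}(u))'\|_{L^2(I)}\leq \|u'\|_{L^2(I)}$, while the trivial bound $\|\rho_{\cos,m}(u)\|_{L^2(I)}\leq \|u\|_{L^2(I)}$ follows by discarding the tail. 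Adding the squares and taking square roots delivers $\|\rho_{\cos,m}(u)\|_{H^1(I)}\leq \|u\|_{H^1(I)}$.

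The only point that requires any care is the bookkeeping of indices and the invocation of Proposition~\ref{lemma0}$(i)$ for $k\geq 2$ (note its statement reads $k>2$, but the formula $\widehat{u'}_{\sin}(k)=-(k-1)\pi\widehat{u}_{\cos}(k)$ actually holds for $k\geq 2$ and clearly intended here); assuming this, the proof is essentially mechanical. No obstacle beyond careful index manipulation is anticipated.
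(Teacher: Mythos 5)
Your proof is correct and follows essentially the same approach as the paper: Parseval's identity together with the coefficient relations in Proposition~\ref{lemma0} to control the tail sums, plus the observation $(\rho_{\cos,m}(u))' = \rho_{\sin,m}(u')$ for item~$(iii)$, with the only cosmetic difference being that you prove $(ii)$ in detail and call $(i)$ analogous while the paper does the reverse. Your parenthetical note is also well taken: the index condition in Proposition~\ref{lemma0}$(i)$ should read $k\geq 2$ rather than $k>2$, and indeed the paper itself invokes the $k=2$ case when deriving the formula for $(\rho_{\cos,m}(u))'$.
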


\begin{proof}
Item $(i)$: Given $u\in H^1_0(I)$, since $u' \in L^2(I)$, due to Corollary~\ref{parsevalcos} we have $\widehat{u'}_{\cos} := \pi_{\cos}(u') \in \ell^2(\N)$ and $\pi_{\sin} \left (\rho^*_{\sin,m}(u)\right) \in \ell^2(\N)$ with $\|\widehat{u'}_{\cos}\|_{\ell^2(\N)}=\|u'\|_{L^2(I)}$ and $\left \|\pi_{\sin} \left (\rho^*_{\sin,m}(u)\right)\right \|_{\ell^2(\N)} = \|\rho^*_{\sin,m}(u)\|_{L^2(I)}$, for $m \in \N$. Furthermore, from Proposition~\ref{lemma0} we can write $\widehat{u}_{\sin}(k)=\dfrac{1}{(k-1)\pi}\widehat{u'}_{\cos}(k)$ for all $k\in \N$, $k>2$. Thus, item $(i)$ follows from
\begin{equation*}
\begin{aligned}
\|\rho^*_{\sin,m}(u) \|^{2}_{L^2(I)} & = \left \|\pi_{\sin} \left (\rho^*_{\sin,m}(u)\right)\right\|^{2}_{\ell^2(\N)} = \sum_{k=m+1}^\infty \left(\frac{1}{\pi (k-1)}\widehat{u'}_{\cos}(k)\right)^2 \\
& \leq \frac{1}{\pi^{2} m^{2}} \sum_{k=m+1}^\infty \left( \widehat{u'}_{\cos}(k)\right)^2 \leq \frac{1}{\pi^{2} m^{2}} \|\widehat{u'}_{\cos} \|^{2}_{\ell^2(\N)} .
\end{aligned}
\end{equation*}
The proof of item $(ii)$ is analogous.

\noindent
Item $(iii)$: Since due to item $(i)$ of Proposition~\ref{lemma0} we have
\begin{equation*}
\left(\rho_{\cos,m}(u)\right)'=\sum_{k=2}^{m}\widehat{u}_{\cos}(k)\left(- (k-1)\pi \sqrt{2}\sin((k-1)\pi x)\right)=\sum_{k=2}^{m}\widehat{u'}_{\sin}(k)\sqrt{2} \sin((k-1)\pi x)
\end{equation*}
and since $\left\|v\right\|^2_{L^2(I)} = \left\|\pi_{\sin}(v)\right\|^2_{\ell^2(\N)} = \sum_{k=2}^\infty\widehat{v}_{\sin}(k)^2$ for all $v\in L^2(I)$ (see Corollary~\ref{parsevalcos}), it follows that
\begin{equation*} \left\|\left(\rho_{\cos,m}(u)\right)'\right\|^2_{L^2(I)} = \sum_{k=2}^{m} \widehat{u'}_{\sin}(k)^2 \leq \sum_{k=2}^\infty \widehat{u'}_{\sin}(k)^2 = \left\|u'\right\|^2_{L^2(I)}.
\end{equation*}
Analogously we can show that $\left\|\rho_{\cos,m}(u)\right\|_{L^2(I)}\leq \left\|u\right\|_{L^2(I)}$. Therefore it follows that
\begin{equation*}
\left\|\rho_{\cos,m}(u)\right\|^2_{H^1(I)} = \left\|\rho_{\cos,m}(u)\right\|_{L^2(I)}^2 + \left\|\left(\rho_{\cos,m}(u)\right)'\right\|_{L^2(I)}^2 \leq \left\|u\right\|_{L^2(I)}^2 + \left\|u'\right\|_{L^2(I)}^2 = \left\|u\right\|^2_{H^1(I)}
\end{equation*}
which proves $(iii)$.
\end{proof}

\begin{lemma}
\label{lemma3}
Let $g \colon \R^3\to \R$ be a $C^1$ function and $w \colon \bar{I} \to \R$ be a $C^2$ function. Letting $z(x)=(x,w(x),w'(x))$ for $x \in \bar{I}$, suppose that $\left\|g(z)\right\|_{C^0(I)}+\left\|g(z)\right\|_{C^1(I)} \leq N$, then
\begin{enumerate}[$(i)$]
\itemsep1em
\item $\|g(z)v - \rho_{\cos,m}\left(g(z)\rho_{\cos,m}(v)\right)\|_{L^2(I)} \leq \dfrac{N}{\pi m} \left\|v\right\|_{H^1(I)}$ ~~for all $v\in H^1(I)$;
\item $\|g(z)v - \rho_{\cos,m}\left(g(z)\rho_{\sin,m}(v)\right)\|_{L^2(I)}\leq \dfrac{N}{\pi m} \left\|v\right\|_{H^1(I)}$ ~~for all $v\in H^1_0(I)$.
\end{enumerate}
\end{lemma}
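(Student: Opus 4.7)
My plan is to decompose the error into a truncation part in the argument of $f(z)$ and a projection-defect part on the outside, and then estimate each separately. For item $(i)$, inserting $f(z)\rho_{\cos,m}(v)$ and using $\rho^*_{\cos,m} = \mathrm{id} - \rho_{\cos,m}$ gives
\begin{equation*}
f(z)v - \rho_{\cos,m}\bigl(f(z)\rho_{\cos,m}(v)\bigr) \;=\; f(z)\,\rho^*_{\cos,m}(v) \;+\; \rho^*_{\cos,m}\bigl(f(z)\rho_{\cos,m}(v)\bigr).
\end{equation*}
The first summand is bounded by $\|f(z)\|_{C^0(I)}\|\rho^*_{\cos,m}(v)\|_{L^2(I)}$, and Proposition~\ref{lemma1}$(ii)$ produces the factor $\tfrac{1}{\pi m}\|v'\|_{L^2(I)} \le \tfrac{1}{\pi m}\|v\|_{H^1(I)}$. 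For the second summand, since $f(z)\rho_{\cos,m}(v)$ is $C^1$ and hence lies in $H^1(I)$, another application of Proposition~\ref{lemma1}$(ii)$ gives $\tfrac{1}{\pi m}\|(f(z)\rho_{\cos,m}(v))'\|_{L^2(I)}$, so everything reduces to estimating that derivative in $L^2(I)$.

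The estimate I would isolate as a lemma is the product bound $\|(fg)'\|_{L^2(I)} \le \|f\|_{C^1(I)}\|g\|_{H^1(I)}$, which follows from the pointwise Cauchy--Schwarz inequality
\begin{equation*}
(f'g + fg')^2 \;\le\; \bigl(f^2 + (f')^2\bigr)\bigl(g^2 + (g')^2\bigr)
\end{equation*}
after integrating over $I$ and pulling out $\|f\|_{C^1(I)}^2$ as a pointwise supremum. Applied with $g = \rho_{\cos,m}(v)$ and followed by Proposition~\ref{lemma1}$(iii)$ to replace $\|\rho_{\cos,m}(v)\|_{H^1(I)}$ by $\|v\|_{H^1(I)}$, it controls the second summand by $\tfrac{\|f(z)\|_{C^1(I)}}{\pi m}\|v\|_{H^1(I)}$. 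Summing the two pieces and invoking the hypothesis $\|f(z)\|_{C^0(I)} + \|f(z)\|_{C^1(I)} \le N$ yields $(i)$.

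For item $(ii)$ I would use the analogous decomposition
\begin{equation*}
f(z)v - \rho_{\cos,m}\bigl(f(z)\rho_{\sin,m}(v)\bigr) \;=\; f(z)\,\rho^*_{\sin,m}(v) \;+\; \rho^*_{\cos,m}\bigl(f(z)\rho_{\sin,m}(v)\bigr),
\end{equation*}
and repeat the argument. The hypothesis $v \in H^1_0(I)$ enters precisely to license Proposition~\ref{lemma1}$(i)$ on the first summand. The main obstacle is that Proposition~\ref{lemma1}$(iii)$ is stated only for the cosine truncation, so I would need the companion inequality $\|\rho_{\sin,m}(v)\|_{H^1(I)} \le \|v\|_{H^1(I)}$ for $v \in H^1_0(I)$ before invoking the product bound on the second summand. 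I would establish this by the same Parseval-type argument as in $(iii)$: using Proposition~\ref{lemma0}$(ii)$ (which needs $v \in H^1_0(I)$), $(\rho_{\sin,m}(v))'$ is a truncated cosine series whose coefficients are exactly those of $v'$, so its $L^2$ norm is at most $\|v'\|_{L^2(I)}$; combined with the orthonormality bound $\|\rho_{\sin,m}(v)\|_{L^2(I)} \le \|v\|_{L^2(I)}$, this gives the $H^1$ estimate, after which the remainder of the computation is identical to $(i)$ and produces $\tfrac{N}{\pi m}\|v\|_{H^1(I)}$.
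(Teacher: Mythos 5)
Your proof is correct and takes essentially the same route as the paper: the same decomposition into $f(z)\rho^*_{\cos,m}(v) + \rho^*_{\cos,m}(f(z)\rho_{\cos,m}(v))$, the same appeal to Proposition~\ref{lemma1}~$(ii)$ on each piece, and the same Cauchy--Schwarz product estimate (which the paper writes pointwise and you isolate as a standalone inequality $\|(fg)'\|_{L^2} \leq \|f\|_{C^1}\|g\|_{H^1}$). For item $(ii)$ the paper simply writes ``analogous,'' while you correctly flag and fill the one genuine gap in that shorthand --- the missing sine companion to Proposition~\ref{lemma1}~$(iii)$, proved via Proposition~\ref{lemma0}~$(ii)$ for $v\in H^1_0(I)$ --- which is a welcome bit of extra care but not a different method.
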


\begin{proof}
Item $(i)$: Given $v\in H^1(I)$, since
\begin{equation}
\label{2thesisItem}
g(z)v - \rho_{\cos,m}(g(z)\rho_{\cos,m}(v)) = g(z)\, \rho^*_{\cos,m}(v) + \rho^*_{\cos,m}(g(z)\, \rho_{\cos,m}(v)),
\end{equation}
we just need to estimate the norms of the last two terms in order to prove item $(i)$. To estimate $g(z)\, \rho^*_{\cos,m}(v)$, notice that, due to item $(ii)$ of Proposition~\ref{lemma1}, given $v\in H^1(I)$ and $m\in \N$, we have
\begin{equation}
\label{lemma3eq1}
\begin{aligned}
\| g(z)\rho^*_{\cos,m}(v)\|_{L^2(I)} & \leq \|g(z)\|_{C^0(\bar{I})}\|\rho^*_{\cos,m}(v)\|_{L^2(I)} \leq
\frac{1}{\pi m} \left\|g(z)\right\|_{C^0(\bar{I})}\|v'\|_{L^2(I)} \\
& \leq \frac{1}{\pi m} \left\|g(z)\right\|_{C^0(\bar{I})}\|v\|_{H^1(I)}.
\end{aligned}
\end{equation}
Now, to estimate $\|\rho^*_{\cos,m}(g(z)\rho_{\cos,m}(v))\|_{L^2(I)}$ we first compute $\left\|\left(g(z)\, \rho_{\cos,m}(v)\right)'\right\|_{L^2(I)}$. Since $g(z)\in C^1(\bar{I})$ and $\rho_{\cos,m}(v)\in C^{\infty}(\bar{I})$ it follows that $g(z)\, \rho_{\cos,m}(v)\in C^1(\bar{I})$, and thus from the Cauchy-Schwarz inequality it follows that
\begin{equation*}
\begin{aligned}
& |(g(z) \rho_{\cos,m}(v))'(x)| \leq \left|(g(z(x)))' \right| \left|(\rho_{\cos,m}(v))(x)\right|+ \left|g(z(x)) \right|\left|(\rho_{\sin,m}(v'))(x)\right| \leq \\
& \sqrt{\left(\left(g(z(x))\right)'\right)^2 + \left(g(z(x))\right)^2}\sqrt{\left((\rho_{\cos,m}(v))(x)\right)^2 + \left((\rho_{\sin,m}(v'))(x)\right)^2} \leq \\
& \left\|g(z)\right\|_{C^1(\bar{I})}\sqrt{\left((\rho_{\cos,m}(v))(x)\right)^2 + \left((\rho_{\sin,m}(v'))(x)\right)^2}
\end{aligned}
\end{equation*}
for all $x\in \bar{I}$ and $m\in \N$. Note that $\rho_{\sin,m}(v') = \left(\rho_{\cos,m}(v)\right)'$, hence from Proposition~\ref{lemma1} and taking the $L^2(I)$ norm in the above inequality we obtain
\begin{equation*}
\left\|\left(g(z)\, \rho_{\cos,m}(v)\right)'\right\|_{L^2(I)}\leq \left\|g(z)\right\|_{C^1(\bar{I})}\left\|\rho_{\cos,m}(v)\right\|_{H^1(I)}\leq \left\|g(z)\right\|_{C^1(\bar{I})}\left\|v\right\|_{H^1(I)}.
\end{equation*}
Therefore, by item $(ii)$ of Proposition~\ref{lemma1} and the inequality above, it follows that
\begin{equation}
\label{lemma3eq3}
\begin{aligned}
\|\rho^*_{\cos,m}(g(z)\, \rho_{\cos,m}(v))\|_{L^2(I)} & \leq \frac{1}{\pi m} \left\|\left(g(z)\, \rho_{\cos,m}(v)\right)'\right\|_{L^2(I)} \\
& \leq \frac{1}{\pi m} \left\|g(z)\right\|_{C^1(\bar{I})} \|v \|_{H^1(I)}.
\end{aligned}
\end{equation}
Thus, combining (\ref{2thesisItem}), (\ref{lemma3eq1}), and (\ref{lemma3eq3}), it follows that
\begin{equation*}
\begin{aligned}
& \|g(z)v - \rho_{\cos,m}(g(z)\rho_{\cos,m}(v))\| \leq \|g(z)\, \rho^*_{\cos,m}(v)\| + \|\rho^*_{\cos,m}(g(z)\, \rho_{\cos,m}(v))\|
\leq \\
& \frac{1}{\pi m} \left\|g(z)\right\|_{C^0(\bar{I})}\|v\|_{H^1(I)} + \frac{1}{\pi m} \left\|g(z)\right\|_{C^1(\bar{I})} \|v \|_{H^1(I)} \leq \dfrac{N}{\pi m} \left\|v\right\|_{H^1(I)},
\end{aligned}
\end{equation*}
which proves $(i)$. The proof of item $(ii)$ is analogous.
\end{proof}

Lemma~\ref{bij_mod_corol} below is the bridge to connect our truncation estimates in order to compute bounds for $\lambda(\mathcal{F})$. Before we state the result, recall that we identify $\R^m$ with a subset of $\ell^2(\N)$ through the isometric embedding $\pi_{\R^m} \colon \R^m \to \ell^2(\N)$ given by $\pi_{\R^m}(a)=(a_1,\cdots,a_m, 0, \cdots)\in \ell^2(\N)$. We also define the projection operators $\pr_m$ and $\pr_{m}^*$ below.

\begin{definition}
Given $m\geq 1$, we define the projection operators $\pr_m \colon \ell^2(\N)\to \R^m$ and $\pr_m^* \colon \ell^2(\N) \to (\R^m)^*$ by
\begin{equation*}
\begin{aligned}
\pr_m(a) = \left (a(1),\cdots,a(m)\right),\\
\pr^*_m(a) = a - \pi_{\R^m} (\pr_m(a)),\\
\end{aligned}
\end{equation*}
where $(\R^m)^*$ is the Hilbert space $\{ a \in \ell^2(\N) \mid a(1) = \cdots = a(m) = 0\}$ with inner product inherited from $\ell^2(\N)$.
\end{definition}

\begin{lemma}
\label{bij_mod_corol}
Let $m>1$ and $(a_{k})_{k\in \N}$ be such that $\lim_{k \to +\infty} a_{k} = r \neq 0$ and $a_k\neq 0$ for $k\geq m+1$. Given a bounded linear operator $\mathcal{P} \colon \ell^2(\N)\to \ell^2(\N)$ let $G \colon \ell^2(\N) \to \ell^2(\N)$ be defined by $(G(v))_k = a_k v_k - (\mathcal{P}(v))_k$, and let
\begin{enumerate}[$(i)$]
\item $\mathcal{P}_{m} = \pr_m \: \circ \: \mathcal{P} \: \circ \: \pi_{\R^m} \colon \R^m\to \R^m$;
\item $G_m = \pr_m \; \circ \: G \; \circ \: \pr_m \colon \ell^2(\N) \to \R^m\subset \ell^2(\N)$.
\end{enumerate}
Then $G$, $\mathcal{P}_m$ and $G_m$ are bounded and $\lambda(\mathcal{P}) \in \left[L - \epsilon_m, L + \epsilon_m \right]$ where
\begin{equation*}
L = \min{\left\{\lambda(\mathcal{P}_{m}), \; \inf_{k\geq m+1} |a_{k}|\right\}}
\quad \text{and} \quad \epsilon_m = \left\|G-G_m\right\| .
\end{equation*}
\end{lemma}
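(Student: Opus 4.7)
My plan is to build a block-diagonal auxiliary operator $H$ whose bijectivity modulus is exactly $L$ via the Hilbert-sum formula of Proposition~\ref{bij_mod}, and then transfer the estimate to $G$ using the continuity statement of Proposition~\ref{basic2}.

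First I verify boundedness. Since $(a_k)$ converges it is bounded, so the diagonal multiplier $v\mapsto(a_kv_k)_{k\in\N}$ is bounded on $\ell^2(\N)$; hence $G$ is bounded as the difference of two bounded operators. Both $\mathcal{P}_m=\pr_m\circ\mathcal{P}\circ\pi_{\R^m}$ and $G_m=\pr_m\circ G\circ\pr_m$ are compositions of bounded linear maps and are therefore bounded as well.

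Next I realize $\ell^2(\N)=\pi_{\R^m}(\R^m)\oplus(\R^m)^*$ as a Hilbert sum and set $H=F_1\oplus F_2$, where $F_1\colon\R^m\to\R^m$ is $G_m$ viewed as an operator on $\R^m$ (its action is $F_1(v)=(a_kv_k)_{1\leq k\leq m}-\mathcal{P}_m(v)$), and $F_2\colon(\R^m)^*\to(\R^m)^*$ is the diagonal operator with entries $(a_{m+1},a_{m+2},\ldots)$. The hypothesis $\lim a_k=r\neq 0$ combined with $a_k\neq 0$ for $k\geq m+1$ guarantees that $\inf_{k\geq m+1}|a_k|>0$, so $F_2$ is invertible, $\|F_2^{-1}\|=(\inf_{k\geq m+1}|a_k|)^{-1}$, and $\lambda(F_2)=\inf_{k\geq m+1}|a_k|$. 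Proposition~\ref{bij_mod}(ii) then gives $\lambda(H)=\min\{\lambda(F_1),\lambda(F_2)\}=L$, and Proposition~\ref{basic2} applied to $G$ and $H$ yields $|\lambda(G)-L|\leq\|G-H\|$.

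The closing step is to identify $\|G-H\|$ with $\epsilon_m=\|G-G_m\|$. Splitting $v=v^{(m)}+v^{*(m)}$ and computing coordinate by coordinate, $(G-H)(v)_k=-\mathcal{P}(v^{*(m)})_k$ for $k\leq m$ and $(G-H)(v)_k=-\mathcal{P}(v)_k$ for $k>m$, so $G-H$ is precisely the off-block-diagonal remainder of $\mathcal{P}$ relative to the decomposition. The main obstacle is exactly this identification: as literally defined, $G_m$ is zero on the tail whereas $H$ keeps the tail diagonal $D^*$, so one must either read the $G_m$ appearing inside $\epsilon_m$ as the block-diagonal truncation $H$ (the natural interpretation that makes the bound tight), or else absorb the discrepancy $\|H-G_m\|=\sup_{k\geq m+1}|a_k|$ via the triangle inequality and record a looser but still valid interval for $\lambda(G)$.
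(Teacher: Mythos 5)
Your proof has two substantive errors. First, you target $\lambda(G)$, while the lemma asserts an enclosure for $\lambda(\mathcal{P})$; writing $\eta$ for the diagonal multiplier $v\mapsto(a_k v_k)_k$ and $\eta^*$ for its restriction to $(\R^m)^*$, we have $\mathcal{P}=\eta-G$, so $\mathcal{P}$ and $G$ are genuinely different operators, and Proposition~\ref{basic2} applied to $G$ and your $H$ controls $\lambda(G)$, not $\lambda(\mathcal{P})$. Second, you choose $F_1=G_m|_{\R^m}$, i.e.\ the diagonal matrix with entries $a_1,\dots,a_m$ minus $\mathcal{P}_m$, and then assert $\min\{\lambda(F_1),\lambda(F_2)\}=L$; but $L$ is defined via $\lambda(\mathcal{P}_m)$, and $\lambda(G_m|_{\R^m})\neq\lambda(\mathcal{P}_m)$ in general, so $\lambda(H)\neq L$. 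The discrepancy you flag at the end (the tail-diagonal term of size $\sup_{k>m}|a_k|$ separating $\|G-H\|$ from $\epsilon_m=\|G-G_m\|$) is not a loose end to absorb by the triangle inequality; it is a symptom that the blocks were taken from the wrong operator.

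The construction that works, and is essentially the paper's, takes the reference operator $\mathcal{P}_m^*:=\mathcal{P}_m\oplus\eta^*$, so the head block is the truncation of $\mathcal{P}$ rather than of $G$. Proposition~\ref{bij_mod}(ii) then gives $\lambda(\mathcal{P}_m^*)=\min\{\lambda(\mathcal{P}_m),\lambda(\eta^*)\}=\min\{\lambda(\mathcal{P}_m),\inf_{k\ge m+1}|a_k|\}=L$ on the nose, and an elementary unwinding of the definitions produces the identity $\mathcal{P}_m^*=\eta-G_m$. Subtracting this from $\mathcal{P}=\eta-G$ gives $\mathcal{P}-\mathcal{P}_m^*=G_m-G$, hence $\|\mathcal{P}-\mathcal{P}_m^*\|=\|G-G_m\|=\epsilon_m$ exactly, with no slack. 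Proposition~\ref{basic2}, now applied to $\mathcal{P}$ and $\mathcal{P}_m^*$, yields $|\lambda(\mathcal{P})-L|\le\epsilon_m$, which is the claim.
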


\begin{proof}
Consider the Hilbert sum $\ell^2(\N)=\R^m\oplus (\R^m)^*$, and let
\begin{itemize}
\item[$\bullet$] $\eta \colon \ell^2(\N)\to \ell^2(\N)$ be defined by $(\eta(v))(k) = a_k v(k)$;
\item[$\bullet$] $\eta^* \colon (\R^m)^*\to (\R^m)^*$ be the restriction of $\eta$ to $(\R^m)^*$;
\item[$\bullet$] $\mathcal{P}_m^* \colon \ell^2(\N)\to \ell^2(\N)$ be defined by $\mathcal{P}^*_m=\mathcal{P}_m \oplus \eta^*$.
\end{itemize}
Since $\lim_{k\to \infty} a_k = r$, it follows that $(a_k)_{k\in \N}$ is a bounded sequence and thus $\eta$ and $\eta^*$ are bounded linear operators. Moreover, $G= \eta - \mathcal{P}$ is bounded as well, since $\mathcal{P}$ and $\eta$ are bounded. Similarly, note that $\pr_m \colon \ell^2(\N)\to \R^m$ is bounded, and hence $G_m = \pr_m \: \circ \: G \: \circ \: \pr_m$ is also bounded. On the other hand, given $v \in \ell^2(\N)$ we have
\begin{equation*} 
\begin{aligned}
\mathcal{P}_m^*(v)
& = \mathcal{P}_{m}(\pr_{m}(v)) + \eta^*(\pr^{*}_{m}(v))
= \pr_{m} \left( \eta(\pr_m(v))-G(\pr_m(v)) \right) + \eta^*(\pr^*_m(v)) \\
& = \eta(\pr_{m}(v)) + \eta^{*}(\pr^{*}_{m}(v)) - \pr_{m}(G(\pr_{m}(v)) \\
& = \eta(\pr_{m}(v)) + \eta(\pr^{*}_{m}(v)) - \pr_{m}(G(\pr_{m}(v))=\eta(v) -G_m(v)
\end{aligned}
\end{equation*}
where we identify $\pr_{m}(\eta(\pr_m(v)) \in \R^m$ with
$\eta(\pr_{m}(v)) \in \ell^2(\N)$ and note that $\eta^{*}(\pr^{*}_{m}(v)) = \eta(\pr^{*}_{m}(v))$ for all $v \in \ell^2(\N)$. Therefore $\mathcal{P}^*_m=\eta-G_m$ and, in particular, it follows that $\mathcal{P}^*_m$ is bounded and $\mathcal{P} - \mathcal{P}^*_m = G_m - G$. From Proposition~\ref{bij_mod} and $\mathcal{P}^*_m=\mathcal{P}_m \oplus  \eta^*$, we obtain
\begin{equation*}
\lambda(\mathcal{P}_m^*) = \min\left\{ \lambda(\mathcal{P}_{m}), \; \lambda(\eta^*)\right\}.
\end{equation*}
Combining these results with Proposition~\ref{basic2}, it follows that
\begin{equation}
\label{lambda_inq1}
\left|\lambda(\mathcal{P}) - \min \left\{\lambda(\mathcal{P}_{m}), \; \lambda(\eta^*)\right\} \right| = \left| \lambda(\mathcal{P}) - \lambda(\mathcal{P}_m^*)\right| \leq  \left\|\mathcal{P}-\mathcal{P}_m^*\right\| = \| G_{m} - G \|.
\end{equation}
Finally, since $\lim_{k \to +\infty} a_{k} = r \neq 0$ with $a_k\neq 0$ for $k\geq m+1$, the sequence $\left((a_k)^{-1}\right)_{k\geq m+1}$ is bounded and thus $\eta^* \colon (\R^m)^*\to (\R^m)^*$ is invertible with $\left\|\left(\eta^*\right)^{-1}\right\|_{\left(\R\right)^*}=\sup_{k\geq m+1}\left|\left({a_k}\right)^{-1}\right|$ and hence $\lambda(\eta^*)=\left\|(\eta^*)^{-1}\right\|_{\left(\R\right)^*}^{-1}=\inf_{k\geq m+1} \left|a_k\right|$. These facts combined with inequality (\ref{lambda_inq1}) concludes the proof.
\end{proof}

Now we are ready to prove the item $(i)$ of Theorem~\ref{strong}.

\begin{proof}[Proof of Theorem~\ref{strong} (i)]
Let $G \colon \ell^2(\N)\to \ell^2(\N)$ be defined by
\[
G(b) = \pi_{\cos}\left( f_u(z)h_{\cos}^{-1}(b) + f_{u'}(z)\left(h_{\cos}^{-1}(b)\right)'\right)
\]
and let
\begin{itemize}
\item[$\bullet$] $\mathcal{P}=\pi_{\cos} \:\circ\: D\mathcal{F}\left(w\right)\:\circ\: h^{-1}_{\cos\ } \colon \ell^2(\N)\to \ell^2(\N)$;
\item[$\bullet$] $\mathcal{P}_m = \pr_m \: \circ \: \mathcal{P} \: \circ \: \pi_{\R^m} = \pr_m \: \circ \: \mathcal{P} \mid_{\R^m} \colon \R^m \to \R^m$;
\item[$\bullet$] $G_m = \pr_m\:\circ\: G\:\circ\: \pr_m \colon \ell^2(\N)\to \R^m\subset  \ell^2(\N)$;
\item[$\bullet$] $a_1=0$ and $a_k = -\dfrac{ (\pi (k-1))^2}{\omega(k)}$ for $k\geq 2$.
\end{itemize}
Since $D\mathcal{F}\left(w\right)$ is a bounded linear operator (by Proposition~\ref{fdiff}) and $\pi_{\cos} \colon L^2(I)\to \ell^2(\N)$ and $h^{-1}_{\cos} \colon \ell^2(\N) \to H^2_N(I)$ are isometric isomorphisms, it follows that $\mathcal{P}=\pi_{\cos}\:\circ\: D\mathcal{F}\left(w\right)\:\circ\: h^{-1}_{\cos\ }$ is a bounded linear operator, with
$\lambda(\mathcal{P})=\lambda(D\mathcal{F}(w))$. Moreover, since $\pr_m \:\circ\: \pi_{\cos}=\pi_{\cos,m}$ it follows that 
\[
\mathcal{P}_m = \pr_m \:\circ \:\mathcal{P} \mid_{\R^m} = \pi_{\cos,m}\:\circ \:D\mathcal{F}(w)\:\circ \:h_{\cos}^{-1} \mid_{\R^m} = D\mathcal{F}(w)_{\cos,m}.
\]
On the other hand given $b\in \ell^2(\N)$, by Proposition~\ref{fdiff}, we have
\[
\begin{aligned}
\mathcal{P}(b) & = \pi_{\cos}\left((D\mathcal{F}(w))(h_{\cos}^{-1}(b))\right)
= \pi_{\cos}\left( \left(h_{\cos}^{-1}(b)\right)'' - f_u(z) h_{\cos}^{-1}(b) - f_{u'}(z) \left(h_{\cos}^{-1}(b)\right)' \right) \\
& = \pi_{\cos}\left(\left(h_{\cos}^{-1}(b)\right)''\right) - G(b),
\end{aligned}
\]
which can be written as
\[
(\mathcal{P}(b))_k = a_k b_k - (G(b))_k \quad \text{for all} \quad k\in \N.
\]
From Lemma~\ref{bij_mod_corol} the operators $G$, $\mathcal{P}_m$ and $G_m$ are bounded and $\lambda(D\mathcal{F}(w))=\lambda(\mathcal{P}) \in [L - \epsilon_m, L + \epsilon_m]$ where
\begin{equation*}
L = \min \left\{ \lambda(D\mathcal{F}(w)_{\cos,m}), \inf_{k\geq m+1} |a_{k}| \right\} \quad \text{and} \quad \epsilon_m = \left\|G-G_m\right\|_{\mathcal{L}(\ell^2(\N))}.
\end{equation*}

On the other hand, notice that $(-a_k)_{k\geq 2}$ is strictly increasing since $(-a_k)^{-1} = \sqrt{(\pi (k-1))^{-4}+(\pi (k-1))^{-2}+1}$ is strictly decreasing in $k$ for $k\geq 2$. Thus $\inf_{k\geq m+1}\left|a_k\right|=\left|a_{m+1}\right| = \dfrac{(\pi m)^2}{\omega(m+1)}$. Therefore, to prove item $(i)$ we just need to prove that $\epsilon_m \leq \frac{N}{\pi m}$.

For this end, letting $v(b)=h_{\cos}^{-1}(b)\in H_N^2(I)$ notice that since $\pi_{\cos}$ is an isometric isomorphism and since $v(b)'\in H^{1}_{0}(I)$, due to items $(i)$ and $(ii)$ of Lemma~\ref{lemma3} we have
\begin{align*}
& \left\|(G - G_m)(b) \right\|_{\ell^2(\N)} \leq \|f_{u'}(z)v(b)'-\rho_{\cos,m}(f_{u'}(z)v(b)') \|_{L^2(I)} + \\
& \|f_{u}(z)v(b)- \rho_{\cos,m}(f_{u}(z)v(b)) \|_{L^2(I)} \leq \frac{N \|v(b) \|_{H^2_N(I)}}{\pi m} = \frac{N}{\pi m}\|b \|_{\ell^2(\N)}
\end{align*}
for all $b \in \ell^2(\N)$, and hence
\[
\epsilon_m = \|G - G_m \|_{\mathcal{L}(\ell^2(\N))}\leq \frac{N}{\pi m}
\]
concluding the proof.
\end{proof}

\subsection{Proof of item $(ii)$ of Theorem~\ref{strong}}

\begin{lemma}
\label{lemmacont}
Consider $u\in H^1(I)$ and let $c_1=\left(\tanh(1)\right)^{-\frac{1}{2}}=\sqrt{\frac{e^2+1}{e^2-1}}$. Then
\begin{enumerate}[$(i$)]
\itemsep1em
\item $\left\|u\right\|_{C^0(\bar{I})} \leq c_1\left\|u\right\|_{H^1(I)}$;
\item $\left\|u\right\|_{C^0(\bar{I})} \leq \left\|u'\right\|_{L^2(I)}$ if $u\in H^1_0(I)$;
\item $\left\| u\right\|_{C^1(\bar{I})}\leq c_1\left\|u\right\|_{H^2(I)}$ if $u\in H^2_N(I)$.
\end{enumerate}
\end{lemma}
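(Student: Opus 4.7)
The plan is to handle items (ii), (i), and (iii) in that order: an easy estimate, the sharp $H^1 \hookrightarrow C^0$ embedding, and an easy combination of the first two.

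Item (ii) follows immediately from the fundamental theorem of calculus and Cauchy--Schwarz: for $u \in H^1_0(I)$ one has $u(x) = \int_0^x u'(t)\,dt$, so $|u(x)| \le \sqrt{x}\,\|u'\|_{L^2(I)} \le \|u'\|_{L^2(I)}$ for every $x \in \bar I$, yielding the claimed bound.

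For item (i), the plan is to exhibit the reproducing kernel of $H^1(I)$ for the point-evaluation functional at $x \in \bar I$. I would set
\[
K(x,y) = \frac{1}{\sinh(1)}
\begin{cases} \cosh(y)\cosh(1-x), & 0 \le y \le x, \\ \cosh(x)\cosh(1-y), & x \le y \le 1, \end{cases}
\]
and check directly that $K(x,\cdot) \in H^1(I)$, that $K(x,\cdot) - \partial_{yy}K(x,\cdot) = \delta_x$ on $(0,1)$ (the jump of $\partial_y K$ across $y=x$ is exactly $-1$, which produces the $\delta_x$), and that the Neumann conditions $\partial_y K(x,0) = \partial_y K(x,1) = 0$ hold. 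Integration by parts on $[0,x]$ and $[x,1]$ then gives, for every $u \in H^1(I)$,
\[
u(x) = \int_0^1 K(x,y)\,u(y)\,dy + \int_0^1 \partial_y K(x,y)\,u'(y)\,dy.
\]
By Cauchy--Schwarz, $|u(x)|^2 \le \bigl(\|K(x,\cdot)\|_{L^2(I)}^2 + \|\partial_y K(x,\cdot)\|_{L^2(I)}^2\bigr)\,\|u\|_{H^1(I)}^2 = K(x,x)\,\|u\|_{H^1(I)}^2$, where the equality comes from taking $u = K(x,\cdot)$ in the reproducing identity itself. A direct calculation gives $K(x,x) = \cosh(x)\cosh(1-x)/\sinh(1)$, whose derivative $\sinh(2x-1)/\sinh(1)$ vanishes only at $x=1/2$ (a minimum). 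Hence $\sup_{x \in \bar I} K(x,x) = K(0,0) = \cosh(1)/\sinh(1) = \coth(1) = c_1^2$, proving (i).

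For item (iii), the hypothesis $u \in H^2_N(I)$ forces $u' \in H^1_0(I)$, so (ii) applied to $u'$ yields $\|u'\|_{C^0(\bar I)} \le \|u''\|_{L^2(I)}$. Combining with (i) applied to $u$,
\[
\|u\|_{C^1(\bar I)}^2 = \sup_{x \in \bar I}\bigl(u(x)^2 + u'(x)^2\bigr) \le \|u\|_{C^0(\bar I)}^2 + \|u'\|_{C^0(\bar I)}^2 \le c_1^2\,\|u\|_{H^1(I)}^2 + \|u''\|_{L^2(I)}^2 \le c_1^2\,\|u\|_{H^2(I)}^2,
\]
where the last inequality uses $c_1 > 1$. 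The main obstacle is item (i): one must guess the Green's function for $1 - \partial_y^2$ on $(0,1)$ with Neumann boundary conditions in order to obtain the sharp constant $\sqrt{\coth(1)}$, and track the jump of $\partial_y K$ at $y = x$ carefully when integrating by parts. Once this kernel is in hand, maximizing $K(x,x)$ on the diagonal is elementary, and (ii) and (iii) are one-liners.
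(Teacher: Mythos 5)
Your proof is correct, and for item (i) it does something the paper does not: the paper simply cites Marti (1983) for the sharp constant $c_1=\sqrt{\coth(1)}$ without argument, whereas you supply a self-contained derivation by exhibiting the reproducing kernel of $H^1(I)$, i.e.\ the Green's function of $1-\partial_y^2$ with Neumann boundary conditions. Your verification is sound: the jump of $\partial_y K$ across $y=x$ is indeed $-1$, so $(1-\partial_y^2)K(x,\cdot)=\delta_x$ distributionally; integration by parts on $[0,x]$ and $[x,1]$ then gives the reproducing identity for all $u\in H^1(I)$; Cauchy--Schwarz and the RKHS identity $\|K(x,\cdot)\|_{H^1}^2=K(x,x)$ reduce the sharp constant to $\max_{x\in\bar I}K(x,x)$; the derivative $\sinh(2x-1)/\sinh(1)$ shows the maximum is at the endpoints, giving $K(0,0)=\coth(1)=c_1^2$. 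What your approach buys is a fully explicit, reproducible argument that also yields the sharpness of $c_1$ for free (equality is attained by $u=K(0,\cdot)$, up to scaling); what it costs is having to guess the kernel. Items (ii) and (iii) match the paper's proof essentially line for line: (ii) is the fundamental theorem of calculus for $H^1$ plus Cauchy--Schwarz, and (iii) combines (i) for $u$ with (ii) applied to $u'\in H^1_0(I)$, absorbing the $\|u''\|_{L^2}^2$ term using $c_1\ge 1$.
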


\begin{proof}
The proof of item $(i)$ can be found for instance in \citep{1983-Marti}, where it is shown that $c_1$ is the smallest constant for such inequality. To prove item $(ii)$ notice that since $u\in H^1_0(I)$ and $u(0)=0$, given $x\in \bar{I}$ it follows from the Fundamental Theorem of Calculus for $H^1$ functions (see \citep[Theorem 8.2]{2011-Brezis}) that
\begin{equation*}
u(x) = \int_0^x u'(t) dt \quad \Rightarrow \quad \left|u(x)\right|\leq \int_0^1 \left|u'(t)\right| dx \leq \sqrt{\int_0^1 u'(t)^2 dt} = \left\|u'\right\|_{L^2(I)}
\end{equation*}
which proves item $(ii)$. Finally, to prove item $(iii)$, given $x\in \bar{I}$, since $u'\in H^1_0(I)$ and $c_1\geq 1$, applying items $(i)$ and $(ii)$ we have
\begin{equation*}
\begin{aligned}
& \sqrt{u(x)^2 + u'(x)^2} \leq \sqrt{\left(c_1\left\|u\right\|_{H^1(I)}\right)^2 + \left\|u''\right\|_{L^2(I)}^2} \leq \\
& \sqrt{\left(c_1\left\|u\right\|_{H^1(I)}\right)^2 + \left(c_1\left\|u''\right\|_{L^2(I)}\right)^2} = c_1\left\|u\right\|_{H^2(I)}
\end{aligned}
\end{equation*}
concluding the proof.
\end{proof}

Now, we are ready to prove item $(ii)$ of Theorem~\ref{strong}.

\begin{proof}[Proof of Theorem~\ref{strong} (ii)]
Consider $u,\, w\in\bar{B}\left(0,r\right)_{H^2_N(I)}$ and $z\in H^2_N(I)$. We have by definition of $\mathcal{DF}$ that
\[
\left( (D\mathcal{F}(u)-D\mathcal{F}(w))(z) \right) (x)=(Df_{(u,v)}(x,u(x),u'(x))-Df_{(u,v)}(x,w(x),w'(x)))(z(x),z'(x)).
\]
for all $x\in \bar{I}$, but by the Taylor Theorem (see \citep[Theorem 5.6.2]{1971-Cartan}) and the hypothesis we have
\begin{equation*}
\|Df_{(u,v)}(x,u_1,v_1)-D f_{(u,v)}(x,u_0,v_0)\|_{\mathcal{L}(\R^2)}\leq
(c_1)^{-1}K \|(u_1,v_1)-(u_0,v_0) \|_{\R^2}
\end{equation*}
for all $x\in \bar{I}$, and $(u_1,v_1),\, (u_0,v_0)\in \bar{B}\left(0,c_1 r\right)_{\R^2}$. On the other hand, since $u,\, w\in\bar{B}\left(0,r\right)_{H^2_N(I)}$, it follows by item $(iii)$ of Lemma~\ref{lemmacont} that $(u(x),u'(x)), (w(x),w'(x))\in \bar{B}\left(0,c_1 r\right)_{\R^2}$ and therefore, letting $h=w-u\in H^2_N(I)$, by the last inequality and item $(iii)$ of Lemma~\ref{lemmacont} we have
\[
\|D f_{(u,v)}(x,u(x),u'(x))-D f_{(u,v)}(x,w(x),w'(x)) \|_{\mathcal{L}(\R^2)}\leq (c_1)^{-1}K \|(h(x),h'(x)) \|_{\R^2}
\]
which implies
\[
|((D\mathcal{F}(u)-D\mathcal{F}(w))(z))(x) |\leq (c_1)^{-1} K \|(h(x),h'(x)) \|_{\R^2} \|(z(x),z'(x)) \|_{\R^2}
\]
for all $x\in \bar{I}$ and since, again by item $(iii)$ of Lemma~\ref{lemmacont}, we have $\left\|h(x),h'(x)\right\|\leq c_1\left\|h\right\|_{H^2_N(I)}$ it follows that
\[
|((D\mathcal{F}(u) - D\mathcal{F}(w))(z))(x) | \leq K \left\|h \right\|_{H^2_N(I)}\left\|(z(x),z'(x))\right\|_{\R^2}
\]
for all $x\in \bar{I}$, and thus, taking the $L^2(I)$ norm in the above inequality we obtain
\[
\left\|(D\mathcal{F}(u) - D\mathcal{F}(w))(z) \right\|_{L^2(I)} \leq K\left\|h\right\|_{H^2_N(I)}\left\|z\right\|_{H^2_N(I)}.
\]
Finally, since $z\in H^2_N(I)$ was arbitrarily chosen we conclude from the last inequality that
\[
\|D\mathcal{F}(u)-D\mathcal{F}(w) \|\leq K  \|h \|_{H^2_N(I)} = K  \|u-w \|_{H^2_N(I)},
\]
proving the theorem.
\end{proof}

\section{Applications}
\label{sec:Applications}

As an application of Theorem~\ref{strong}, we present some examples where we rigorously verify the existence of a true solution for $\mathcal{F}(u) = 0$ near an approximate solution in a closed ball $\bar{B}(0,r)_{H^2(I)}$ for $\mathcal{F} \colon H^2_N(I) \to L^2$ given by $\mathcal{F}(u) = u'' - f(x,u,u')$. The rigorous verification of a solution $w\in \bar{B}(0,r)_{H^2_N(I)}$ is done by computing rigorously bounds for $\eta$, $\nu$ and $K$ in Theorem~\ref{kantorovich2}. The code to verify the solutions in the examples below are available at \citep{ramos-eduardo-2020-4310019}.

\subsection{Computation of a numerical solution $w$}

We compute a numerical solution $w$ of $\mathcal{F}(w)=0$ by numerically computing a zero $b$ of the nonlinear system of equations $\mathcal{F}_{\cos,m}(b)=0$, where $\mathcal{F}_{\cos,m} \colon \R^m\to \R^m$ is given in Definition~\ref{F_cosm} and denoted component-wise by $\mathcal{F}_{\cos,m}=(G_1,\cdots,G_n)$, where $G_i \colon \R^m\to \R$ is given by
\begin{equation*}
G_1(b)=\int_0^1 \left(w(b)''(x) - f(x,w(b)(x),w(b)'(x))\right)dx
\end{equation*}
and
\begin{equation*}
\begin{aligned}
G_i(b)=\int_0^1 \left( w(b)''(x) - f(x,w(b)(x),w(b)'(x))\right)\sqrt{2}\cos((i-1)\pi x) dx
\end{aligned}
\end{equation*}
for $2\leq i\leq m$, where $w(b)(x)=h^{-1}_{\cos,m}(b)(x) = b_1 + \sum_{i=2}^m \sqrt{2} b_i \frac{\cos((i-1)\pi x)}{\omega(i)}$.

\subsection{Computation of $\eta$}

Since the numerical solution $w$ is an elementary function, we can compute an interval enclosure $\hat{\eta}$ for
\begin{equation*}
\left\|\mathcal{F}(w)\right\|_{L^2} = \sqrt{\int_0^1 \left(w''(x) - f(x,w(x),w'(x))\right)^2 dx}
\end{equation*}
using the Simpson rule with explicit error bounds (see \citep[Theorem~12.1]{2010-Rump}) and thus we let
\[
\eta=\sup(\hat{\eta})\geq \left\|\mathcal{F}(w)\right\|_{L^2}.
\]

\subsection{Computation of $\nu$}

The computation of a bound for $\lambda(D\mathcal{F}(w))$ is done as follows. First we prove that $D\mathcal{F}(w)_{\cos,m}$ is invertible. Then it follows by definition that
\begin{equation*}
\lambda(D\mathcal{F}(w)_{\cos,m})=
\left\|D\mathcal{F}(w)_{\cos,m}^{-1}\right\|_2^{-1}\geq \left\|D\mathcal{F}(w)^{-1}_{\cos,m}\right\|_F^{-1}
\end{equation*}
where $\left\|.\right\|_F$ is to the Frobenius norm. Then we can use the bounds for the Simpson rule (see \citep{2010-Rump}) to rigorously compute integrals enclosing the coordinates of $D\mathcal{F}(w)_{\cos,m} \colon \R^m\to \R^m$ given by Corollary~\ref{cor3} and thus we compute an interval enclosure $\hat{\nu}_0$ for
\begin{equation*}
\left\|D\mathcal{F}(w)^{-1}_{\cos,m}\right\|_F^{-1}
\end{equation*}
and let $\nu_0 = \inf(\hat{\nu}_0)\leq \lambda(D\mathcal{F}(w)_{\cos,m})$. To compute $N$ satisfying \eqref{eq:estimateN} in Theorem~\ref{strong}, we directly compute an interval enclosure $\hat{N}$ for
\[
\left\|f_u(z)\right\|_{C^0(I)}+\left\|f_u(z)\right\|_{C^1(I)}+\left\|f_{u'}(z)\right\|_{C^0(I)}+\left\|f_{u'}(z)\right\|_{C^1(I)},
\]
where $z = z(x)=(x,w(x),w'(x))$, and let $N = \sup{(\hat{N})}$. Finally, we compute an interval enclosure $\hat{\nu}$ for
\begin{equation*}
\min \left\{ \nu_0, \frac{(\pi m)^2}{\omega(m+1)} \right \} - \frac{N}{\pi m}
\end{equation*}
and let $\nu=\inf(\hat{\nu})$. From Theorem~\ref{strong} it follows that $\nu \leq \lambda(D\mathcal{F}(w))$, thus $\nu$ is the desired lower bound.

\subsection{Computation of $r$ and $K$}

In our examples we use $r=\left\|w\right\|_2$, and 
$K$ is computed directly from item $(ii)$ of Theorem~\ref{strong} as follows. We first estimate
\[
\left\|D_{(u,v)}^2f(x,u,v)\right\| \leq \sqrt{(f_{uu}(x,u,v))^2 + 2 (f_{uv}(x,u,v))^2 + (f_{vv}(x,u,v))^2}
\]
and then using interval arithmetic we compute an upper bound $K$ for
\[
c_1 \sqrt{(f_{uu}(x,u,v))^2 + 2 (f_{uv}(x,u,v))^2 + (f_{vv}(x,u,v))^2}
\]
in $\bar{I} \times \bar{B}\left(0,c_1 r\right)_{\R^{2}}$, where $c_1 = \sqrt{\frac{e^2+1}{e^2-1}}$. It then follows that $c_1 \left\|D_{(u,v)}^2f(x,u,v)\right\| \leq K$ as desired.

\begin{example}
\label{example1}\normalfont
In this example we shall consider the following
equation
\begin{equation*}
u''- \lambda(g(u) + c(x)) = 0,\ u'(0) = u'(1) = 0
\end{equation*}
where $g \colon \R\to \R$ and $c \colon \R\to \R$ are $C^2$ functions.

In this case the hypothesis on Theorem~\ref{strong} for $N$ and $K$ translate to $\left\|g_u(w)\right\|_{C^0(I)}+\left\|g_u(w)\right\|_{C^1(I)}\leq N$ and $c_1\left|g_{uu}(u)\right|\leq K$ for all $u\in [-c_1r,c_1r]$. We consider two specific cases of this equation as follows
\begin{enumerate}[i)]
\item Given $\mathcal{F}_1:H^2_N(I)\to L^2(I)$ defined by $\mathcal{F}_1(u)=u'' - \left(-u+\frac{u^3}{6}-\cos(\pi x)\right)$ and letting $R=1$, we found $u_1$ with $m=5$ Fourier coefficients given by
\begin{equation*}
u_1(x) = 0.82942 \frac{\sqrt{2}\cos(\pi x)}{\omega(2)} - 4.2928 \times 10^{-5}\frac{\sqrt{2}\cos(3\pi x)}{\omega(4)}
\end{equation*}
as a numerical approximation for a zero of $\mathcal{F}_1$ 
(see Figure~\ref{fig:solns}(a)). We then computed, as described above, the bound
\begin{equation*}
K := 1.0436
\end{equation*}
for Theorem~\ref{strong}, and we computed
\begin{equation*}
\|\mathcal{F}_1(u_1) \|_{L^2(I)} \leq 5.1907 \times 10^{-8} =: \eta
\end{equation*}
and
\begin{equation*}
\lambda( D\mathcal{F}_1(u_1) ) \geq \lambda(D\mathcal{F}_1(u_1)_{\cos,m})-\frac{N}{5\pi}\geq 0.29845 =: \nu.
\end{equation*}
Using these bounds we conclude by Theorem~\ref{kantorovich2} that there exists a zero $u^*_1$ of $\mathcal{F}_1$ such that
\begin{equation*}
\left\|u_1-u^*_1\right\|_{H^2_N(I)}\leq \frac{\nu-\sqrt{\nu^2 - 2K\eta}}{K} \leq 1.7392 \times 10^{-7}.
\end{equation*}
The computation and the verification of this solution took $25.53$ seconds on a $2.3$ GHz Quad-Core processor. \\

\item For $\mathcal{F}_2 \colon H^2_N(I)\to L^2(I)$ defined by $\mathcal{F}_2(u)=u'' - \left(\sin(u) - \cos(2\pi x)\right)$, once again for $m=5$ and $R=1$, we computed a numerical solution $u_2$ given by
\begin{equation*}
u_2 (x)= 3.1416 + 0.73507 \frac{\sqrt{2}\cos(2\pi x)}{\omega(3)} + 2.1943 \times 10^{-11} \frac{\sqrt{2}\cos(4\pi x)}{\omega(5)}
\end{equation*}
as an approximation for a zero of $\mathcal{F}_2$ 
(see Figure~\ref{fig:solns}(b)). We then computed the bound
\begin{equation*}
K := 1
\end{equation*}
for Theorem~\ref{strong}, and the bounds
\begin{equation*}
\|\mathcal{F}_2(u_2) \|_{L^2(I)}\leq  5.7081 \times 10^{-7} =: \eta
\end{equation*}
and
\begin{equation*}
\lambda(D\mathcal{F}_2(u_2))\geq \lambda(D\mathcal{F}_2(u_2)_{\cos,m}) - \frac{N}{m\pi}\geq 0.29867 =: \nu.
\end{equation*}

We then concluded from Theorem~\ref{kantorovich2} that there exists a zero $u_2^*$ of $\mathcal{F}_2$ such that
\begin{equation*}
\|u_2 - u^* \|_{H^2_N(I)} \leq \frac{\nu-\sqrt{\nu^2 - 2K\eta}}{K} \leq 1.9112 \times 10^{-6}.
\end{equation*}
The computation and the verification of this solution took $26.27$ seconds on a $2.3$ GHz Quad-Core processor.

\end{enumerate}
\end{example}

\begin{example}
\label{CHERPION2001}\normalfont
Consider $\mathcal{F}_3 \colon H^2_N(I)\to L^2(I)$ defined by $\mathcal{F}_3(u)(t)=u''(t) -\left( u(t) - (u'(t))^2 + \sin(t) \right)$, for $u\in H^2_N(I)$ and $t\in I$. The two-point boundary value $\mathcal{F}_3(u)(t)=0$ was studied using iteration methods in \citep{CHERPION200175} where a lower solution $\alpha(t)= -1$ and an upper solution $\beta(t)=1$ were computed satisfying $\alpha(t) \leq u^*(t) \leq \beta(t)$ for all $t \in I$, where $u^*$ is the true solution.

We applied our method with $m=8$ and $R=0.5$ using the numerical solution
\begin{equation*}
\begin{aligned}
u_3 (x) = & -0.4546385
+ 0.2353235 \frac{\sqrt{2}\cos(\pi x)}{\omega(2)} + \\
& 0.0134243 \frac{\sqrt{2}\cos(2\pi x)}{\omega(3)}
+ 0.0244880 \frac{\sqrt{2}\cos(3\pi x)}{\omega(4)} + \\
& 0.0039254 \frac{\sqrt{2}\cos(4\pi x)}{\omega(5)}
+ 0.0088177 \frac{\sqrt{2}\cos(5\pi x)}{\omega(6)} + \\
& 0.0017909 \frac{\sqrt{2}\cos(6\pi x)}{\omega(7)}
+ 0.0044966 \frac{\sqrt{2}\cos(7\pi x)}{\omega(8)}
\end{aligned}
\end{equation*}
as an approximation for a zero of $\mathcal{F}_3$ 
(see Figure~\ref{fig:solns}(c)). For this approximate solution we compute, as above, the bounds $K = 2$, $\eta = 0.0041703$, and $\nu = 0.18818$. Applying Theorem~\ref{kantorovich2} we get that there exists a true zero $u_3^*$ of $\mathcal{F}_3$ satisfying
\begin{equation*}
\|u_3 - u^*_3 \|_{H^2_N(I)} \leq t^* = 0.02566,
\end{equation*}
and $u^*_3$ is the unique zero in $\bar{B}(u_3,t^{**})_{H^2_N}$, where $t^{**} = 0.162522$. Hence we get local uniqueness and a good estimate of the true solution of the boundary value problem. The computation time for this example was $103.56$ seconds on a $2.3$ GHz Quad-Core processor.
\end{example}

\begin{example}\label{Reviewer}
\normalfont
Consider $\mathcal{F}_4 \colon H^2_N(I)\to L^2(I)$ defined by $\mathcal{F}_4(u)(t)=u''(t) + u(t)^2 + u(t) - |t-1/2|$, for $u\in H^2_N(I)$ and $t\in I$. We apply our method to the boundary value problem $\mathcal{F}_4(u)(t)=0$ with $m=8$ and $R=0.5$ and using the numerical solution
\[
u_4 (x) = 0.2070967
-0.1505309 \frac{\sqrt{2}\cos(2\pi x)}{\omega(3)}
-0.0160072 \frac{\sqrt{2}\cos(6\pi x)}{\omega(7)}
\]
as an approximate zero of $\mathcal{F}_4$ 
(see Figure~\ref{fig:solns}(d)). For this approximate solution we compute the bounds $K = 4$, $\eta = 0.0069201$, and $\nu = 0.23533$. Applying Theorem~\ref{kantorovich2} we get that there exists a true zero $u_4^*$ of $\mathcal{F}_4$ satisfying
\begin{equation*}
\|u_4 - u^*_4 \|_{H^2_N(I)} \leq t^* = 0.057703,
\end{equation*}
and $u^*_4$ is the unique zero in $\bar{B}(u_4,t^{**})_{H^2_N}$, where $t^{**} = 0.059963$. The computation time for this example was $110.21$ seconds on a $2.3$ GHz Quad-Core processor.
\end{example}

\begin{figure}
\centering
\begin{subfigure}{0.35\textwidth}
\centering
\includegraphics[width=\textwidth]{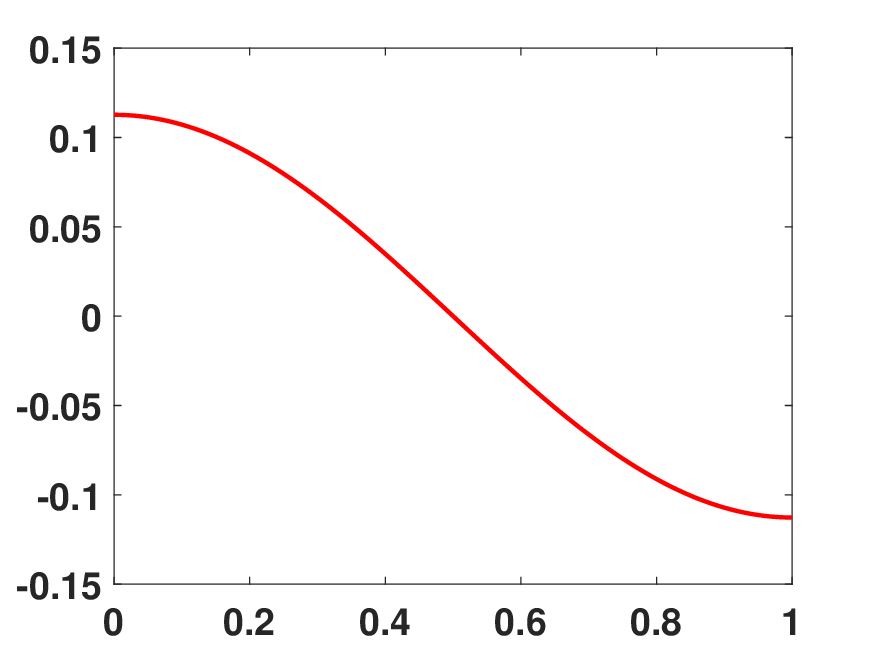}
\caption{Solution $u_1$}
\end{subfigure}
\begin{subfigure}{0.35\textwidth}
\centering
\includegraphics[width=\textwidth]{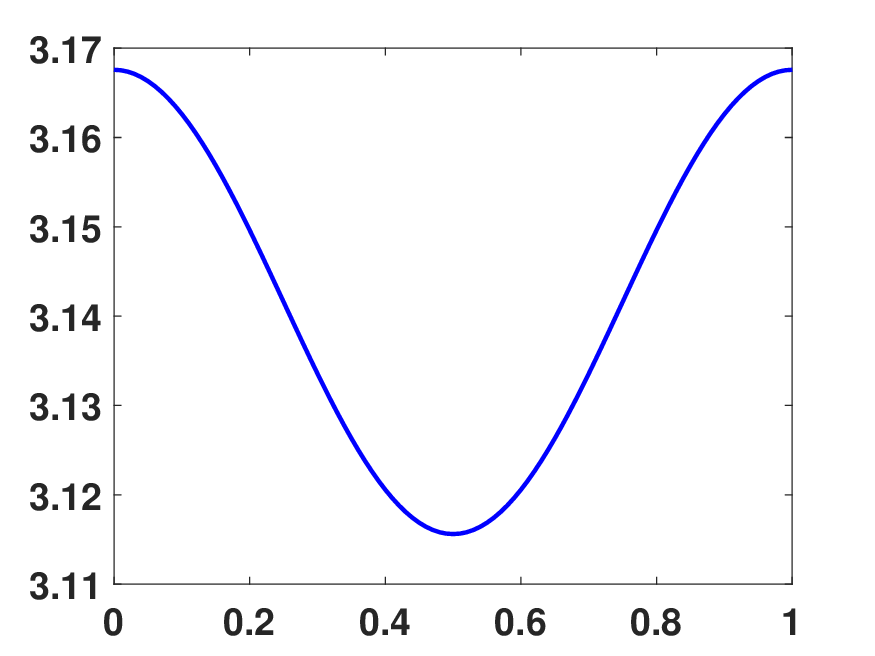}
\caption{Solution $u_2$}
\end{subfigure}
\begin{subfigure}{0.35\textwidth}
\centering
\includegraphics[width=\textwidth]{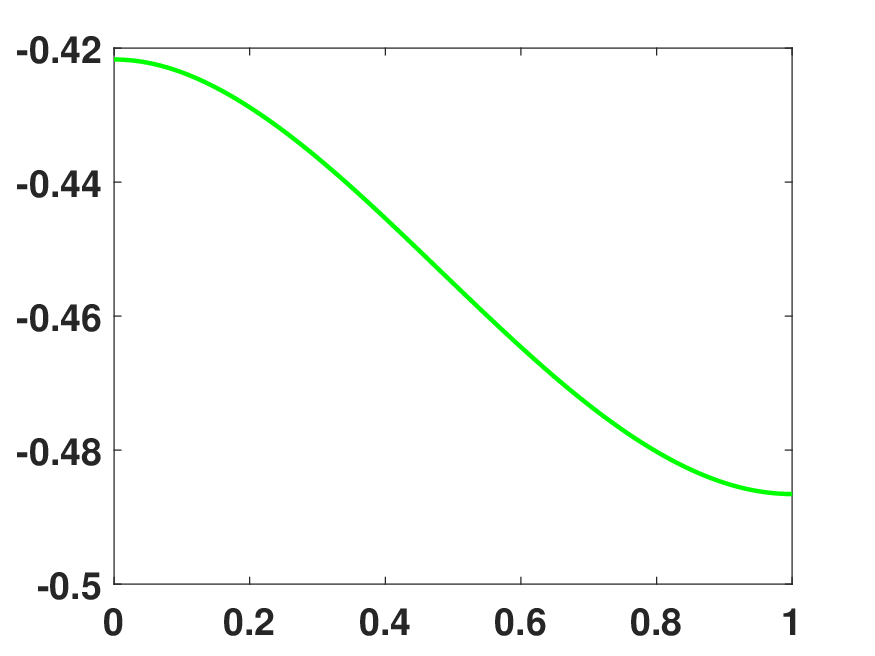}
\caption{Solution $u_3$}
\end{subfigure}
\begin{subfigure}{0.35\textwidth}
\centering
\includegraphics[width=\textwidth]{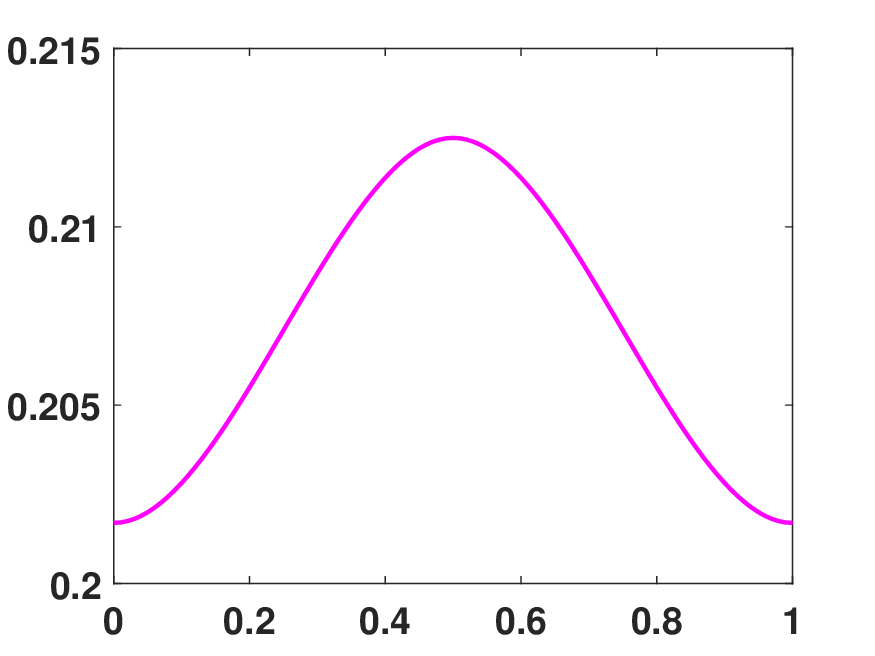}
\caption{Solution $u_4$}
\end{subfigure}
\caption{Numerical zeros $u_i$ of $\mathcal{F}_i$ in Examples~\ref{example1}, \ref{CHERPION2001} and \ref{Reviewer}}
\label{fig:solns}
\end{figure}

\begin{remark}
Notice that we can use Lemma~\ref{lemmacont} to get bounds on the norm
$\left\| \cdot \right\|_{C^1(\bar{I})}$ in terms of the norm $\left\| \cdot \right\|_{H^2_N(I)}$ of a solution. Hence we can characterize the precision of our approximate solutions with respect to the $C^1(I)$ norm.
\end{remark}

\section{Conclusion}
\label{sec:Conclusion}

As shown in Examples~\ref{example1}, \ref{CHERPION2001}, and \ref{Reviewer}, our method was able to verify the existence of true solutions near numerical solutions for two-point boundary value problems with small error bounds (less than $10^{-5}$). In Example~\ref{CHERPION2001} our method was able to get better localization of the true solution than the iterative method in \citep{CHERPION200175}. The method is easy to implement and the code is available at \citep{ramos-eduardo-2020-4310019}. Theorem~\ref{strong} together with Theorem~\ref{kantorovich2} are easy to apply because the bounds can be calculated via numerical integration with interval arithmetic. This motivated our choice to use the bijectivity modulus to verify the solutions.

\section*{Acknowledgments}
The work of M.G. was partially supported by FAPESP grant 2019/06249-7, by CNPq grant 309073/2019-7, by the National Science Foundation under awards DMS-1839294 and HDR TRIPODS award CCF-1934924, DARPA contract HR0011-16-2-0033, and National Institutes of Health award R01 GM126555.

\bibliographystyle{elsarticle-num} 

\end{document}